\documentclass{amsart}
\usepackage{hyperref}
\usepackage[british]{babel}
\usepackage{enumerate, longtable}
\usepackage{amsmath, amscd, amsfonts, amsthm, amssymb, latexsym, comment, stmaryrd, graphicx, color}
\usepackage[all]{xypic}
\usepackage[T1]{fontenc}
\usepackage[utf8]{inputenc}

\newtheorem{thm}{Theorem}[section]
\newtheorem{lem}[thm]{Lemma}
\newtheorem{defi}[thm]{Definition}
\newtheorem{rem}[thm]{Remark}

\newtheorem{prop}[thm]{Proposition}

\newcommand{\GL}{\mathrm{GL}}

\newcommand{\calL}{\mathcal{L}}

\newcommand{\cA}{\mathcal{A}}
\newcommand{\cB}{\mathcal{B}}
\newcommand{\cC}{\mathcal{C}}
\newcommand{\cE}{\mathcal{E}}

\newcommand{\cK}{\mathcal{K}}

\newcommand{\cO}{\mathcal{O}}

\newcommand{\cS}{\mathcal{S}}

\newcommand{\cV}{\mathcal{V}}

\newcommand{\fS}{\mathfrak{S}}

\newcommand{\BB}{\mathbb{B}}
\newcommand{\CC}{\mathbb{C}}
\newcommand{\DD}{\mathbb{D}}

\newcommand{\KK}{\mathbb{K}}

\newcommand{\QQ}{\mathbb{Q}}
\newcommand{\RR}{\mathbb{R}}

\newcommand{\ZZ}{\mathbb{Z}}




\newcommand{\ignore}[1]{}

\begin{document}

\title{Simultaneous Diagonalization of Incomplete Matrices 
and Applications}
\author{Jean-Sébastien Coron \and Luca Notarnicola \and Gabor Wiese}
\keywords{Linear Algebra, Cryptanalysis, Approximate-Common-Divisor Problem, Multilinear Maps in Cryptography}
\address{Université du Luxembourg, 
	Maison du nombre, 6, avenue de la Fonte, L-4364 Esch-sur-Alzette, Luxembourg}
\email{
jean-sebastien.coron@uni.lu, luca.notarnicola@uni.lu, gabor.wiese@uni.lu}

\begin{abstract}
We consider the problem of 
recovering the entries of diagonal matrices 
$\{U_a\}_a$ for $a = 1,\ldots,t$ from
multiple ``incomplete'' samples $\{W_a\}_a$ of the form 
$W_a=PU_aQ$,
where $P$ and $Q$ are unknown matrices of low rank. 
We devise practical algorithms for this problem depending on the ranks 
of $P$ and $Q$.
This problem finds its motivation in cryptanalysis: 
we show how to significantly
improve previous algorithms for solving the approximate 
common divisor problem 
and breaking CLT13 cryptographic multilinear maps.
\end{abstract}

\maketitle

\section{Introduction}
\label{s:intro}

\subsection{Problem Statement}
This work considers the following computational problem
from linear algebra.

\begin{defi}
	[Problems $\mathbb{A}, \mathbb{B}, \mathbb{C}, \mathbb{D}$] 
	\label{def:prABCD}
	Let $n \geq 2, t \geq 2$ and $2 \leq p,q \leq n$ be integers.
	Let $\{U_a : 1 \leq a \leq t\}$ be
	diagonal 
	matrices in $\QQ^{n\times n}$.
	Let $\{W_a: 1 \leq a \leq t\}$ be matrices in $\QQ^{p \times q}$
	and $W_0 \in \QQ^{p\times q}$
	such that 
	$W_0$ has full rank and
	there exist
	matrices $P \in \QQ^{p \times n}$ of full rank $p$
	and $Q \in \QQ^{n \times q}$ of full rank $q$,
	such that $W_0 = P\cdot Q$
	and $W_a = 
	P \cdot U_a \cdot Q$
	for $1 \leq a \leq t$.
	We distinguish the following cases:
	\begin{align*}
	{(\mathbb{A})} \quad p=n \ \textrm{ and } \ q=n   \qquad& 
	{(\mathbb{B})} \quad p = n \ \textrm{ and } \ q < n \\
	{(\mathbb{C})} \quad p < n \ \textrm{ and } \ q = n \qquad &
	{(\mathbb{D})} \quad p < n \ \textrm{ and } \ q = p
	\end{align*}
	In each of the four cases, the problem states as follows: 
	\begin{itemize}
		\item [(1)] 
		Given the matrices $\{W_a : 0 \leq a \leq t\}$,
		compute $\{(u_{1,i},\ldots,u_{t,i}) : 1 \leq i \leq n\}$,
		where for $1 \leq a \leq t$, $u_{a,1},\ldots,u_{a,n} \in \QQ$
		are the diagonal entries of
		matrices $\{U_a : 1 \leq a \leq t\}$ as above.
		\item [(2)] 
		Determine whether the solution is unique.
	\end{itemize}
\end{defi}

Problem $\mathbb{A}$ is straightforward for any $t \geq 1$ 
by simultaneous diagonalization of $W_0^{-1}W_a=Q^{-1}U_aQ$ for every $a$.
Problems $\BB$ and $\CC$ are equivalent in view of their
symmetry in $p$ and $q$, and any algorithm for one solves the other upon transposing.
Therefore, we shall devise algorithms for
$\CC$ and $\DD$ only.
We refer to the matrices $\{W_a\}_a$ as ``incomplete'', as the 
low rank matrices $P$ and/or $Q$ ``steal'' information.
Of interest is the case when $p$ is much smaller than $n$.
We remark that Problem $\mathbb{A}$ is an underlying problem in 
previous works \cite{corper,cheon15} in cryptanalysis.

\subsection{Our Contributions}
Mainly, we provide efficient algorithms for 
Problems $\CC$ and $\DD$ of Def.~\ref{def:prABCD},
and show how to minimize the parameters
$p$ and $t$ with respect to $n$.
We further propose two concrete applications of our algorithms
in cryptography.
We believe that our algorithms are of independent interest 
and hope that more applications are to be found.
\\

\paragraph{\itshape{Algorithms for Problems $\CC$ and $\DD$}}
Our approach to Problem $\mathbb{C}$ is to use 
the invertibility of $Q$ and write 
$W_a = {P}{U}_a{Q}= 
{P}{Q} {Q}^{-1}{U}_a {Q} = {W}_0 {Z}_a$ 
with ${Z}_a = {Q}^{-1}{U}_a {Q}$, for every $1 \leq a \leq t$.
As ${W}_0$ is not invertible, we cannot recover 	
${Z}_a$ directly.
However we interpret this as a system of linear equations 
to solve for
$\{Z_a\}_a$. 
This system is, in general, underdetermined and does not yield 
the matrices $\{Z_a\}_a$ uniquely.
However, exploiting the special feature that $\{Z_a\}_a$ 
commute among each other
leads to additional linear equations.
This enables to recover $\{Z_a\}_a$ uniquely,
and simultaneous diagonalization eventually yields the 
diagonal entries of $\{U_a\}_a$.
We determine exact bounds on the parameters 
to ensure that we have at least as many linear equations as variables;
we obtain that $p$ and $t$
can be set as $\cO(\sqrt{n})$.
Our algorithm is heuristic only, but performs well in practice.

We reduce Problem $\DD$ to Problem 
$\CC$ by ``augmenting'' $Q$ with extra columns
so that it becomes invertible.
In this case, we show that $p$ can be close to $2n/3$.
We refer to Sec.~\ref{s:algo_C} and \ref{s:algo_D} for a complete
description of our algorithms and
provide the results of practical experiments in 
Sec.~\ref{s:practical}.\\

\paragraph{\itshape{Improved algorithm for 
		an approximate common divisor problem}}

Approximate common divisor problems 
have gained a lot of interest and different variants have been investigated. 
In \cite{cohnheninger}, Cohn and Heninger study generalizations
of the approximate common divisor problem via lattices. 
A simple version including only a single prime number 
is studied in \cite{galacd}.
A lattice cryptanalysis of the single-prime version is 
described in \cite{dghv_FHE}.
In this work we consider the multi-prime version 
(CRT-ACD Problem) from \cite{corper}, which is a 
factorization problem with constraints based on Chinese Remaindering.

We improve the two-step algorithm by \cite{corper}.
Namely, we remark that \cite{corper} relies on solving a certain 
instance of Problem $\mathbb{A}$. 
By solving an appropriate instance of Problem 
$\CC$ instead, we obtain a quadratic improvement in the number of input samples.
Namely, letting $n$ be the number of secret primes in the 
public modulus $M$, we can factor $M$ given only 
$\cO(\sqrt{n})$ input samples, 
whereas \cite{corper} uses $\cO(n)$.
We therefore achieve complete factorization of the public modulus
while limiting the input size drastically. \\

\paragraph{\itshape{Improved cryptanalysis of CLT13 Multilinear Maps}}

In 2013, \cite{ggh13} described the first 
construction of cryptographic multilinear maps,
and since then, many 
important applications in cryptography were found. 
A similar construction over the integers 
was described in \cite{CLT13} and a third construction based on the LWE Problem
was proposed \cite{ggh15}.
In the last years, many attacks against these constructions
appeared. The most devastating are 
the so-called ``zeroizing attack'', 
exploiting the availability of low-level encodings 
of zero.
The algorithm \cite{cheon15} recovers 
all secret parameters of \cite{CLT13} in the 
multiparty Diffie-Hellman key exchange.
Similar attacks have been described against GGH13 
and GGH15, see \cite{hujia16,cllt16}.

Our third contribution is therefore to improve the 
cryptanalysis of Cheon {\sl et al.} \cite{cheon15} against CLT13
when fewer encodings are public. 
Namely, \cite{cheon15} relies on solving some instance of
Problem $\mathbb{A}$. 
By solving instances of Problems $\CC$ or $\DD$ instead,
we can  lower the number of public encodings required for the cryptanalysis.
Specifically,
for a composite modulus $x_0$
of $n$ primes,
we obtain improved algorithms 
using only $\cO(\sqrt{n})$ encodings of zero
(compared to $n$ in \cite{cheon15}), 
or in total $4n/3$ encodings
(compared to $2n+2$ in \cite{cheon15}).
We confirm our results with practical experiments in Sec.~\ref{s:practical}.

\section{Notations and Preliminary Remarks}
\label{s:notations_remarks}

\subsection{Notation}
\label{ss:notation}
For $n \in \ZZ_{\geq 1}$, let $[n]$ be the set $\{1,\ldots,n\}$.
For a set $R$ and $r,s\in \ZZ_{\geq 1}$, we let $R^{r\times s}$ be the set of 
$r\times s$ matrices with entries in $R$.
For $A \in R^{r \times s}$ and $B \in R^{r \times s'}$, $[A|B] \in R^{r\times(s+s')}$
is the matrix obtained by concatenating the columns of $A$ and $B$. 
We let $1_n$ be the identity matrix
in dimension $n\in \ZZ_{\geq 1}$.
For a set $\cS$, its cardinality is denoted by $\#\cS$.

\subsection{Remarks about Definition \ref{def:prABCD}}
\label{ss:remarks_defABCD}
We shall make some important 
considerations about Def.~\ref{def:prABCD}.\\

(i) 
Let $\{W_a\}_a$ be as in Def.~\ref{def:prABCD}, 
$\pi \in \fS_n$ be a permutation
with associated 
matrix $A_{\pi} \in \{0,1\}^{n \times n}$
and $D$ any invertible diagonal $n \times n$ matrix.
Then $P' = PDA_{\pi}$ and $Q'=A_{\pi}^{-1}D^{-1}Q$ satisfy $W_0=P'Q'$ and 
$W_a=P'U_a'Q'$ for all $a \in [t]$, where $U_a'=A_{\pi}^{-1} U_a A_{\pi}$ is obtained 
from $U_a$ by permuting its diagonal entries via $\pi$.
Thus, $P',\{U'_a\}_a$ and $Q'$ satisfy the same problem.
For this reason, we only ask to recover the 
set $\{(u_{1,i},\ldots,u_{t,i}):1 \leq i \leq n\}$ in Def.~\ref{def:prABCD}.\\

(ii) 
If $t=1$ in Problem $\CC$, then the problem is not solvable 
because its solution is not unique.
Namely, we write $W_1 = W_0Z_1$, 
where $Z_1 = Q^{-1}U_1Q$ is diagonalizable with eigenvalues 
the diagonal entries of $U_1$.
But also, for every $v \in \ker(W_0)$ 
one has $W_1=W_0(Z_1+vw_1^T)$ for some $w_1 \in \QQ^n$.
Now, $Z_1$ and $Z_1+vw_1^T$ likely have different eigenvalues
which means that the solution is not unique.\\

(iii) 
There are cases when the problem is clearly not solvable for $p<n$.
For example, if $P = [1_p|0_{p\times (n-p)}]$ 
then for all $a$ the matrix $PU_a$ only 
involves the first $p$ diagonal entries of $U_a$ and the information on the 
remaining $n-p$ is lost.
These cases will not occur for "generic" or "random" instances of the problem.\\


(iv) 
If a matrix $W_0=PQ$ is not available as input
(we call it a "special input" here),
then one can recover ratios of diagonal entries of the matrices 
$\{U_a\}_a$, if $t \geq 3$.
Namely, defining $P'=PU_1$ and assuming that $U_1$ is invertible, 
one obtains $W_0':=P'Q=W_1$
and for $2 \leq a \leq t$, $W_a' := P'(U_aU_1^{-1})Q = W_a$. 
Running the algorithm on input $\{W'_a:0 \leq a \leq t-1\}$ reveals 
the tuples of diagonal entries
of the matrices $U_a U_1^{-1}$ for $1 \leq a \leq t-1$.
We will use this approach in Sec.~\ref{ss:improve_clt13} 
to improve the (CLT13) multilinear map 
cryptanalysis.\\

(v) 
For simplicity, we have stated Def.~\ref{def:prABCD} 
over $\QQ$. 
More generally, we can consider matrices over a field 
$\KK$ with exact linear 
algebra (e.g. solving linear systems, diagonalizing matrices, etc.).
Our algorithms apply to that case.

\section{An Algorithm for Problem $\CC$}
\label{s:algo_C}

We describe an algorithm to solve Problem $\CC$ of Def.~\ref{def:prABCD}.

\subsection{Description}
\label{ss:algo_C_description}

Consider integers $n,t \geq 2$ and $2 \leq p < n$
and an instance of Problem $\CC$.
We remark that it is enough to solve the following problem.
\begin{defi}[Problem $\CC'$]
	Let integers $n,t \geq 2$ and $2 \leq p < n$. Given
	\begin{itemize}
		\item a matrix $V \in \QQ^{p \times n}$ of rank $p$
		and a basis matrix $E \in \QQ^{n \times (n-p)}$
		of $\ker(V)$,
		\item a set of matrices $\{Y_a:a \in [t]\} \subseteq \QQ^{n \times n}$
	\end{itemize}
compute matrices $\{X_a:a \in [t]\} \subseteq \QQ^{(n-p)\times n}$,
such that the matrices $Y_a + EX_a$ for $a \in [t]$ 
commute with each other.
\end{defi} 


\begin{prop}\label{prop:Cprime_solves_C}
Let $\{W_a:0\leq a \leq t\}$ as in Problem $\mathbb{C}$.
Let $E \in \QQ^{n \times (n-p)}$ be a basis matrix
of the kernel of $W_0$.
Let 
$W_0^+ $ be a right-inverse\footnote{If $W_0$ (of full rank $p$) is defined over the complex
numbers, one can take $W_0^+=W_0^*(W_0 W_0^*)^{-1}$ where $W_0^*$
is the conjugate transpose of $W_0$, and $W_0^*=W_0^T$
over the real numbers.} of $W_0$.
Define 
$V = W_0$ 
and $Y_a = W_0^+ W_a$ for $a \in [t]$.
Assume that Problem $\mathbb{C}'$ is uniquely solvable
for the input matrices 
$V, E$ and $\{Y_a: a \in [t]\}$.
	
Then Problem $\mathbb{C}$ is uniquely 
solvable for the input matrices 
$\{W_a : 0 \leq a \leq t\}$.
Moreover, the matrix $Q$ in the 
assumption of Problem $\mathbb{C}$ 
is unique up to multiplication
 by a permutation matrix and an invertible diagonal matrix if 
at least one of the matrices 
$\{U_a\}_a$ has pairwise distinct diagonal
entries.
\end{prop}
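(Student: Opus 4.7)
The plan is to translate Problem $\CC$ into Problem $\CC'$ by parameterising all matrices $Z_a$ satisfying $W_0 Z_a = W_a$, then invoke the uniqueness hypothesis for $\CC'$, and finally carry out simultaneous diagonalization.

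First, I would exhibit a solution to Problem $\CC'$ coming from the hidden data of Problem $\CC$. Since $Q$ is invertible (we are in case $\CC$, so $q=n$), set $Z_a := Q^{-1} U_a Q$; then $W_0 Z_a = PQ \cdot Q^{-1} U_a Q = P U_a Q = W_a$. Using that $W_0 W_0^+ = 1_p$ and $\ker(W_0)$ has basis matrix $E$, the general solution of the linear equation $W_0 X = W_a$ is $X = W_0^+ W_a + E X_a = Y_a + E X_a$ for some $X_a \in \QQ^{(n-p)\times n}$. Applying this to $X = Z_a$ gives matrices $X_a$ with $Z_a = Y_a + E X_a$. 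The family $\{Z_a\}_a$ is pairwise commuting because each $Z_a$ is conjugated by the \emph{same} matrix $Q$ to a diagonal matrix $U_a$. Hence $\{X_a\}_a$ is a valid solution of Problem $\CC'$.

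Second, by the assumed uniqueness of the solution to Problem $\CC'$ on input $V,E,\{Y_a\}_a$, the tuple $(X_1,\ldots,X_t)$, and therefore the tuple $(Z_1,\ldots,Z_t)$, is uniquely determined from the input data $\{W_a\}_{0 \le a \le t}$. Each $Z_a$ is diagonalisable (it is similar to $U_a$) and the $Z_a$'s commute, so they are simultaneously diagonalisable: there exists $Q' \in \GL_n(\QQ)$ (or over a suitable extension) and diagonal $D_a$ with $Q'^{-1} Z_a Q' = D_a$. Since the same $Q'$ is used for every $a$, reading the $i$-th diagonal entries gives a well-defined $t$-tuple $(d_{1,i},\ldots,d_{t,i})$ for each $i \in [n]$. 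Because $Q^{-1} Z_a Q = U_a$ as well, these tuples coincide, as a multiset, with $\{(u_{1,i},\ldots,u_{t,i}) : i \in [n]\}$, which solves Problem $\CC$.

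Third, for the uniqueness of $Q$ up to a permutation and a diagonal scaling, I would use the hypothesis that some $U_{a_0}$ has pairwise distinct diagonal entries. Then $Z_{a_0}$ has $n$ distinct eigenvalues, so each eigenspace is one-dimensional. Any matrix $Q''$ with $Q''^{-1} Z_{a_0} Q'' $ diagonal must have each column a non-zero eigenvector of $Z_{a_0}$; the columns are therefore the columns of $Q$ up to permutation and nonzero rescaling, as recorded in the preliminary Remark (i).

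The main subtlety I anticipate is the bookkeeping between the ordered family $\{Z_a\}_a$, which Problem $\CC'$ determines in an indexed way, and the unordered set of $t$-tuples requested in Problem $\CC$: the argument must use a \emph{single} simultaneous diagonaliser $Q'$ for the whole family rather than diagonalising each $Z_a$ in isolation, for otherwise the correspondence between diagonal entries of different $U_a$'s (i.e.\ the tuples themselves) would be lost. Everything else reduces to standard commutative linear-algebra facts.
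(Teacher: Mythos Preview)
Your argument is correct and follows essentially the same path as the paper: define $Z_a=Q^{-1}U_aQ$, observe $W_0Z_a=W_a$, parametrise the solutions as $Y_a+EX_a$, use unique solvability of $\CC'$ to pin down the $Z_a$'s, and simultaneously diagonalise.

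One small slip in the last paragraph: from $Z_a=Q^{-1}U_aQ$ one has $QZ_{a_0}Q^{-1}=U_{a_0}$, so the matrix playing the role of your $Q''$ (i.e.\ with $Q''^{-1}Z_{a_0}Q''$ diagonal) is $Q^{-1}$, not $Q$. Equivalently, the \emph{rows} of $Q$ are left eigenvectors of $Z_{a_0}$. The conclusion is unaffected: any alternative $Q'$ from another decomposition $W_0=P'Q'$, $W_a=P'U_a'Q'$ gives the same $Z_a$ (by the uniqueness already established), so $Q'^{-1}$ is another such $Q''$; with one-dimensional eigenspaces, $Q'^{-1}$ and $Q^{-1}$ differ by a column permutation and diagonal scaling, hence $Q'$ and $Q$ differ by left multiplication by a permutation matrix and an invertible diagonal matrix. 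The paper phrases this step slightly differently (it first uses uniqueness of the tuple set to write $U_a'=A^{-1}U_aA$ for a permutation $A$, then shows $QQ'^{-1}A^{-1}$ commutes with every $U_a$ and is therefore diagonal), but the content is the same.
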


\begin{proof}
	Write $W_0 = PQ$ and
	$W_a = P U_a Q$ 
	as in Problem $\mathbb{C}$.
	For all $a \in [t]$, we write
	$W_a = (PQ) (Q^{-1} U_a Q) = W_0 Z_a$,
	where 
	$Z_a := Q^{-1} U_a Q$.
	The matrices $\{Z_a : a\in [t]\}$ commute and are 
	simultaneously diagonalizable.
	For every $a \in [t]$, $Z_a$ can be written as
	$Z_a = Y_a + EX_a$
	for some $X_a \in \QQ^{(n-p)\times n}$
	since 
	$W_0 Y_a = W_a$.
	Since the matrices $\{Z_a\}_a$ 
	commute, 
	${V}$, ${E}$ and $\{{Y}_a\}_a$
	define a valid input for Problem $\mathbb{C}'$.
	By assumption, we can compute
	the matrices $\{X_a\}_a$ 
	by solving Problem $\mathbb{C}'$ and these
	are unique.
	From the knowledge of $\{X_a\}_a$,
	we compute $Z_a = Y_a + EX_a$
	for $a \in [t]$. Then these matrices are also unique. 
	Thus the set of tuples of eigenvalues $\{(u_{1,i},\ldots,u_{t,i}) : 1 \leq i \leq n\}$
	is unique and can be computed by simultaneous 
	diagonalization. 
	
	For the last part of the statement, 
	assume that we have matrices $P',Q'$,
	diagonal matrices $\{U'_a\}_a$,
	which are necessarily of the form 
	$U_a' = A^{-1}U_aA$ for a permutation matrix $A$, 
	such that
	$W_0 = P' Q'$ and $W_a' = P' U_a' Q'$ for every $a$.
	By uniqueness of the matrices $\{Z_a\}_a$, we have 
	$$ Z_a = Q^{-1} U_a Q = Q'^{-1} U_a' Q' =
	Q'^{-1} A^{-1} U_a A Q'  \quad , \quad a \in [t]$$
	or, equivalently
	$U_a (QQ'^{-1} A^{-1}) =
	 ({Q}{Q'}^{-1}{A}^{-1}){U}_a$ for $a \in [t]$.
	Thus, 
	${D}:= {Q}{Q}'^{-1} A^{-1}$ 
	commutes with the  
	matrices $\{U_a\}_a$ and so is diagonal itself, 
	as one of $\{U_a\}_a$ has pairwise distinct entries. 
	This gives ${Q} = {D}{A}{Q}'$ and
	proves the statement.
\end{proof}

\subsubsection{Solving Problem $\CC'$}
We consider matrices $V,E,\{Y_a\}_a$ as in Problem $\CC'$.
We want to compute matrices $\{X_a\}_a$ such that 
the matrices $Z_a=Y_a+EX_a$ commute for all $a \in [t]$,
that is, the Jacobi bracket $[Z_a,Z_b]=Z_aZ_b-Z_bZ_a$
is the zero matrix for all $a<b$.
Using $Z_a=Y_a+EX_a$, this is equivalent to 
\begin{align}\label{eq:comm}
{0}= {Y}_a {Y}_b - {Y}_b {Y}_a
+ {E} \cdot {S}_{ab} + {Y}_a {E} {X}_b
- {Y}_b {E} {X}_a
\end{align}
where ${S}_{ab} :=
{X}_a {Y}_b + {X}_a {E} {X}_b
- {X}_b {Y}_a- {X}_b {E} {X}_a$.
Left multiplication by $V$ and $VE=0$ imply
$
VY_aY_b-VY_bY_a+VY_aEX_b-VY_bEX_a=0
$, 
which is equivalent to 
\begin{equation}\label{eq:sysC'}
{\Delta}_{ab} =
{V}{Y}_b {E} {X}_a-{V}{Y}_a {E} {X}_b
\quad , \quad 1 \leq a < b \leq t\ ,
\end{equation}
where  
${\Delta}_{ab} :=
{V}{Y}_a {Y}_b - {V}{Y}_b {Y}_a$
is completely explicit in terms of the input matrices.
Eq.~\eqref{eq:sysC'} describes 
a system of linear equations over $\QQ$ 
in the variables given by the entries of
 ${X}_a$ and ${X}_b$.
Since ${\Delta}_{ab}$ has size $p \times n$,
this gives a system of 
$n p$ linear equations in the
$2 (n-p) n$ variables given
by the entries of ${X}_a$ and ${X}_b$.
Writing \eqref{eq:sysC'} for every $(a,b) \in [t]^2$ with $a < b$
we obtain a system of
$t(t-1)/2  n  p$
linear equations and
$t (n-p) n$ variables 
given by the entries of the matrices
$\{{X}_a : a \in [t]\}$.\\

From what precedes and Prop.~\ref{prop:Cprime_solves_C}, 
we deduce the following result.

\begin{prop}\label{prop:param_C}
	A unique solution to Problem $\mathbb{C}$ is implied by 
	the existence of a unique solution to the explicit system of
	linear equations given in 
	\eqref{eq:sysC'}, which is a system
	of $\frac{1}{2}t(t-1)np$ linear equations in $t(n-p)n$ variables.
	There are at least as many equations as variables 
	as soon as 
	\begin{equation}\label{eq:CondC}
	\frac{p}{n} \geq \frac{2}{t+1} \ .
	\end{equation} 
\end{prop}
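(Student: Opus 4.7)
The plan is to invoke Proposition~\ref{prop:Cprime_solves_C} after showing that a unique solution to the linear system~\eqref{eq:sysC'} forces a unique solution to Problem~$\CC'$. My key observation would be that~\eqref{eq:sysC'} was obtained from the commutativity condition~\eqref{eq:comm} by left-multiplying with $V$ and using $VE=0$; thus every $\{X_a\}_a$ that solves Problem~$\CC'$ automatically solves~\eqref{eq:sysC'}. To see that a Problem~$\CC'$ solution exists at all, I would exploit that the input does come from a genuine Problem~$\CC$ instance $W_a=PU_aQ$: the matrices $Z_a:=Q^{-1}U_aQ$ commute pairwise and satisfy $W_0Z_a=W_a=W_0Y_a$, so each column of $Z_a-Y_a$ lies in $\ker(W_0)$, which is spanned by the columns of $E$; since these columns are linearly independent, this yields a unique $X_a\in\QQ^{(n-p)\times n}$ with $Z_a=Y_a+EX_a$. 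If~\eqref{eq:sysC'} admits only one solution, this Problem~$\CC'$ solution must coincide with it, so Problem~$\CC'$ is uniquely solvable and Proposition~\ref{prop:Cprime_solves_C} delivers unique solvability of Problem~$\CC$.

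Next, I would count equations and variables. Each instance of~\eqref{eq:sysC'} is an equality of $p\times n$ matrices and therefore furnishes $np$ scalar linear equations for every ordered pair $(a,b)$ with $1\leq a<b\leq t$; summing over the $\binom{t}{2}=t(t-1)/2$ such pairs gives $\tfrac{1}{2}t(t-1)np$ equations. The unknowns are the entries of the $t$ matrices $X_a\in\QQ^{(n-p)\times n}$, totalling $t(n-p)n$ variables. To obtain~\eqref{eq:CondC}, I would write the condition ``equations $\geq$ variables'' as $\tfrac{1}{2}t(t-1)np\geq t(n-p)n$, divide through by $tn>0$, and rearrange to $(t-1)p\geq 2(n-p)$, equivalently $(t+1)p\geq 2n$.

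The only subtlety, which is not really an obstacle, is that~\eqref{eq:sysC'} is strictly weaker than the full commutativity~\eqref{eq:comm}, since left-multiplication by $V$ annihilates the term $E\cdot S_{ab}$. The argument skirts this weakness by using the a priori existence of a commuting solution coming from the Problem~$\CC$ data, which pins down the unique solution of~\eqref{eq:sysC'} as that commuting solution; without this existence input, uniqueness in~\eqref{eq:sysC'} alone would not be enough.
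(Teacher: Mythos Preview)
Your proposal is correct and follows essentially the same route as the paper, which simply states that the proposition follows ``from what precedes and Prop.~\ref{prop:Cprime_solves_C}'' without spelling out the details. In fact, your write-up is more careful: you make explicit why uniqueness in the weaker system~\eqref{eq:sysC'} forces uniqueness in Problem~$\CC'$ (namely, because a genuine commuting solution exists from the Problem~$\CC$ data and must therefore coincide with the unique system solution), a point the paper leaves implicit.
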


Since there is no obvious linear 
dependence in the equations of the system,
we heuristically expect, in the generic case, to find a unique 
solution $\{{X}_a:a \in [t]\}$ 
under Condition \eqref{eq:CondC}.
This solves Problem $\mathbb{C}'$, and therefore 
Problem $\mathbb{C}$.

\subsection{Algorithm}
\label{ss:algo_C}
We refer to this algorithm as Algorithm $\cA_\CC$ in the sequel.

\begin{center}
	\begin{minipage}{0.95\textwidth}
		\textit{Input}:
		A valid input for Problem $\CC$\\
		\textit{Output}: 
		"Success" or "Fail"; in case of "Success", 
		also output a solution.
		"Success" means uniqueness of the solution;
		"Fail" means that no solution was found.
		
		\begin{enumerate}
			\item  Compute a basis matrix $E$ of $\ker{({W}_0)}$.
			
			\item  Define 
			${W}_0 ^+ = {W}_0 ^T ({W}_0 {W}_0^T)^{-1}$
			and for $(a,b) \in [t]^2$ with $a<b$, 
			compute the matrices
			${\Delta}_{ab}= {W}_a {W}_0^+ {W}_b -
			{W}_b {W}_0^+ {W}_a$.
			
			\item  Solve the system of linear equations 
			described in Eq.~\eqref{eq:sysC'}.
			\begin{enumerate}
				\item [(3.1)] If the solution is not unique, then output "Fail" and break.
				\item [(3.2)] Otherwise, denote by 
				$\{{X}_a : a \in [t]\}$
				the unique solution. 
			\end{enumerate}
			
			\item Perform simultaneous diagonalization of
			${Z}_a= {W}_0^+ {W}_a + {E}{X}_a$ for $a \in [t]$.
			
			\item Output "Success" with the tuples of 
			eigenvalues of the matrices 
			$\{{Z}_a\}_a$.
		\end{enumerate}
	\end{minipage}
\end{center}

\subsection{Optimization of the parameters}
\label{ss:optimize_C}

We find minimal possible (with respect to $n$) 
values for $t$ and $p$.
In our applications in Sec.~\ref{s:applications} 
we are led to minimize $p+t$ as a function of $n$.
Following Prop.~\ref{prop:param_C}, we set 
$F_n(t) = p_n(t)+t = \frac{2n}{t+1}+t$ 
with $t \in \RR_{>0}$ and $n \in \ZZ_{\geq 2}$.
It is easy to see that $F_n$ has a minimum at 
$t_0=\sqrt{2n}-1$
which gives $p=p_n(t_0)=\sqrt{2n}$.
This shows that minimal values for $p$ and $t$ are
$\cO(\sqrt{n})$. This is confirmed practically in Sec.~\ref{s:practical}.

\section{An Algorithm for Problem $\DD$}
\label{s:algo_D}

We now present an algorithm to solve Problem $\DD$ of Def.~\ref{def:prABCD}.

\subsection{Description}
\label{ss:algo_D_description}

Consider integers $n,t \geq 2$ and $2 \leq p < n$
and an instance of Problem $\DD$. 
The main idea of our algorithm is a reduction to Problem $\CC$ 
which can be solved using Algorithm $\cA_{\CC}$.
More precisely, we exhibit matrices (that are augmentations of $\{W_a\}_a$)
$W_0'=PQ'$ and $W_a' = PU_aQ'$ for $a \in [t]$,
for the same diagonal matrices $\{U_a\}_a$ and 
for some $n \times n$ invertible matrix $Q'$.

\subsubsection{Reducing Problem $\DD$ to Problem $\CC$}
For $1 \leq a, b \leq t$, we define the matrices
\begin{equation}\label{eq:Delta_abPrD}
{\Delta}_{ab} = {W}_a {W}_0^{-1} {W}_b -
{W}_b {W}_0^{-1} {W}_a \ . 
\end{equation}
Note that ${\Delta}_{ab} = - {\Delta}_{ba}$.
We have the following lemma.

\begin{lem}\label{lem:existVaGa}
	Let 
	${W}_0 = {P}{Q}$ 
	and 
	${W}_a = {P}{U}_a{Q}$ for $a \in [t]$ 
	as in Problem $\mathbb{D}$. 
	Let ${B} = {Q}{W}_0^{-1} {P}-{1}_n
	\in \mathbb{Q}^{n\times n}$
	and let $r$ denote its rank. Then:
	\begin{enumerate}
		\item [(i)] $r = n-p$
		\item [(ii)] there exist matrices
		${V}_a \in \mathbb{Q}^{p\times r}$ and ${G}_a \in
		\QQ^{r \times p}$ for $a \in [t]$ such that for all 
		$1 \leq a< b \leq t$, 
		one has
		$
		{\Delta}_{ab} = {V}_a {G}_b - {V}_b {G}_a . 
		$
	\end{enumerate}
\end{lem}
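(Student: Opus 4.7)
My plan is to treat the two parts separately but link them via the natural rank factorization of the matrix $B$.

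For part (i), the key observation is that $R := QW_0^{-1}P$ is idempotent. Indeed, since $W_0 = PQ$, one has $R^2 = QW_0^{-1}(PQ)W_0^{-1}P = QW_0^{-1}P = R$. Consequently $B = R - 1_n$ satisfies $B^2 = -B$, so $B$ is diagonalizable over $\QQ$ with eigenvalues in $\{0,-1\}$, and hence $\rk(B)$ equals the multiplicity of the eigenvalue $-1$, which equals $-\tr(B)$. Using cyclicity of the trace,
\begin{equation*}
\tr(B) = \tr(QW_0^{-1}P) - n = \tr(W_0^{-1}PQ) - n = \tr(1_p) - n = p - n,
\end{equation*}
so $\rk(B) = n - p = r$, as required.

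For part (ii), I would exploit two facts: the identity $QW_0^{-1}P = B + 1_n$ just established, and the commutativity $U_aU_b = U_bU_a$ of diagonal matrices. Writing
\begin{equation*}
W_aW_0^{-1}W_b = PU_a Q W_0^{-1} P U_b Q = PU_a(B + 1_n)U_b Q = PU_a B U_b Q + PU_aU_b Q,
\end{equation*}
and the analogous expression with $a$ and $b$ swapped, the diagonal terms $PU_aU_bQ$ cancel in $\Delta_{ab}$, leaving
\begin{equation*}
\Delta_{ab} = PU_a B U_b Q - PU_b B U_a Q.
\end{equation*}
Part (i) lets me pick a rank factorization $B = FH$ with $F \in \QQ^{n \times r}$ and $H \in \QQ^{r \times n}$. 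Setting $V_a := PU_aF \in \QQ^{p \times r}$ and $G_a := HU_aQ \in \QQ^{r \times p}$ then yields $\Delta_{ab} = V_aG_b - V_bG_a$.

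I do not anticipate a serious obstacle: the crux is spotting that the shift from $W_a W_0^{-1} W_b$ to the simpler bilinear expression $PU_aBU_bQ$ is enabled precisely by the commutativity of the $U_a$'s, which kills the non-$B$ contribution. The rest is packaging the resulting expression through the rank factorization of $B$, whose rank was determined in (i).
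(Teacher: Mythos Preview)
Your proof is correct and, for part (ii), essentially identical to the paper's: both expand $\Delta_{ab}$ via $QW_0^{-1}P = B + 1_n$, use commutativity of the $U_a$ to cancel the non-$B$ terms, and then package the result through a rank factorization of $B$. For part (i) you take a slightly different but equivalent route---computing $\rk(B)$ via idempotence of $R$ and the trace identity $\tr(QW_0^{-1}P)=\tr(W_0^{-1}PQ)=p$---whereas the paper argues directly that the $1$-eigenspace of $C=QW_0^{-1}P$ contains $\mathrm{im}(Q)$ and has dimension exactly $p$; both arguments rest on the same idempotent structure.
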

\begin{proof}
	\textit{(i)} 
	Let ${C}={Q} {W}_0^{-1} {P}$. Then ${C}{Q} = {Q}$
	and the column-image of 
	${Q}$ is contained in the eigenspace, say $\cE$, 
	of ${C}$ for eigenvalue $1$. 
	So, $\cE$ has dimension at least $p$. 
	However, the rank of ${C}$ is 
	bounded above by the rank of ${Q}$, i.e. by $p$. 
	Finally, $\cE$ has dimension exactly $p$ and the 
	rank $r$ of ${B} = {C} - {1}_n$ equals $n-p$.  
	
	\textit{(ii)} 
	For every $1\leq a,b \leq t$, we can write
	\begin{eqnarray}\label{eq:symeqndeltaab}
	{\Delta}_{ab} &=&
	{P} {U}_a ( {Q} {W}_0^{-1} {P} - {1}_n) {U}_b {Q}
	-  {P} {U}_b ( {Q} {W}_0^{-1} {P} - {1}_n) {U}_a 
	{Q} \notag \\
	&=& {P} {U}_a {B} {U}_b {Q} - {P} {U}_b {B} 
	{U}_a {Q} 
	\end{eqnarray}
	since
	${U}_a$ and ${U}_b$ commute.
	Since ${B}$ has rank $r$, 
	there exist matrices ${B}_1 \in \QQ^{n\times r}, {B}_2\in \QQ^{r\times n}$ 
	with ${B} = {B}_1{B}_2$. 
	Setting ${V}_a = {P}{U}_a {B}_1$ and 
	${G}_a = {B}_2 {U}_a {Q}$
	gives the claim.  
\end{proof}

The following properties of the matrix $B$ defined in Lem.~\ref{lem:existVaGa} are useful.

\begin{lem}\label{lem:propMatB}
	Let ${W}_0 = {P}{Q}$ and 
	${W}_a = {P}{U}_a{Q}$
	for $a \in [t]$ as in Problem $\mathbb{D}$.
	Let ${B} \in \QQ^{n \times n}$ be the matrix of 
	Lem.~\ref{lem:existVaGa}
	with respect to ${P}$ and ${Q}$ and let $r=n-p$.
	Let ${B}_1 \in \QQ^{n \times r}$ and ${B}_2 \in \QQ^{r \times n}$ be
	such that
	${B} = {B}_1 {B}_2$. Then:
	\begin{enumerate}
		\item [(i)] ${P}  {B}_1 = {0}_{p \times r}$
		\item [(ii)] 
		The matrix $Q':=[{Q} | {B}_1]$ is an $n \times n$ invertible matrix.
	\end{enumerate}
\end{lem}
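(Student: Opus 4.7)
The plan is to verify (i) by a direct computation using the definition $B = QW_0^{-1}P - 1_n$, and to deduce (ii) from (i) together with invertibility of $W_0$ and the rank facts forced on $B_1, B_2$ by any rank-$r$ factorisation.

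For (i), I would first compute
\begin{equation*}
PB = P(QW_0^{-1}P - 1_n) = (PQ)W_0^{-1}P - P = W_0 W_0^{-1}P - P = 0_{p\times n}.
\end{equation*}
Here I use $W_0 = PQ$ and that $W_0$ is invertible in Problem $\mathbb{D}$ since $q = p$ and $W_0$ has full rank. To pass from $PB = 0$ to $PB_1 = 0$, I would invoke the standard observation that any factorisation $B = B_1 B_2$ of a rank-$r$ matrix through an intermediate space of dimension $r$ forces $B_1$ to have full column rank $r$ and $B_2$ to have full row rank $r$. In particular $B_2$ is surjective from $\QQ^n$ onto $\QQ^r$, so there exists $B_2^+ \in \QQ^{n\times r}$ with $B_2 B_2^+ = 1_r$. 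Then
\begin{equation*}
PB_1 = PB_1(B_2 B_2^+) = (PB)B_2^+ = 0,
\end{equation*}
which gives (i).

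For (ii), the strategy is to show that the columns of $Q' = [Q \mid B_1]$ are linearly independent, which suffices since $Q'$ is square of size $n = p + r$. Suppose $Qa + B_1 b = 0$ for some $a \in \QQ^p$ and $b \in \QQ^r$. Left-multiplying by $P$ gives $PQa + PB_1 b = 0$, and using (i) together with $PQ = W_0$ this simplifies to $W_0 a = 0$. Invertibility of $W_0$ yields $a = 0$. The remaining equation is $B_1 b = 0$, and since $B_1$ has full column rank $r$ (by the rank argument used in (i)), we obtain $b = 0$. Hence the kernel of the linear map $(a,b) \mapsto Qa + B_1 b$ is trivial, and $Q'$ is invertible.

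I do not expect any serious obstacle. The only subtle point worth spelling out carefully is the rank-$r$ factorisation fact used twice — that in $B = B_1 B_2$ with $B_1 \in \QQ^{n\times r}$, $B_2 \in \QQ^{r\times n}$ and $\rk(B) = r$, both $B_1$ and $B_2$ necessarily attain rank $r$ — since otherwise the right-inverse of $B_2$ and the injectivity of $B_1$ would not be available.
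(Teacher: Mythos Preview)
Your proof is correct. Part (i) matches the paper's argument essentially verbatim: both compute $PB=0$ from the definition of $B$ and then use surjectivity of $B_2$ (you via a right inverse, the paper by writing an arbitrary $x\in\QQ^r$ as $B_2 y$) to conclude $PB_1=0$.

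For part (ii) your route differs from the paper's. The paper shows $\mathrm{im}(Q)\cap\mathrm{im}(B_1)=\{0\}$ by taking $Qx=By$, rewriting via $B=QW_0^{-1}P-1_n$ to force $y\in\mathrm{im}(Q)$, and then invoking the identity $BQ=0$. You instead test linear independence of the columns of $[Q\mid B_1]$ directly: from $Qa+B_1b=0$ you left-multiply by $P$ and use part (i) together with invertibility of $W_0=PQ$ to kill $a$, then injectivity of $B_1$ to kill $b$. Your approach is arguably cleaner in that it reuses (i) rather than introducing the dual identity $BQ=0$, and it makes the role of the invertibility of $W_0$ explicit; the paper's version is equally short but relies on a separate (unstated) computation of $BQ=0$. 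Both are elementary and of the same strength.
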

\begin{proof}
	\textit{(i)} The matrix ${B}_2$ 
	defines a surjection ${B}_2: \mathbb{Q}^n \to \mathbb{Q}^r$.
	Thus for every ${x} \in \mathbb{Q}^r$, we write ${x}=
	{B}_2 {y}$ for some ${y} \in \mathbb{Q}^n$ and obtain
	${P}{B}_1 {x} =
	{P} {B}_1 ({B}_2 {y})=
	({P} {B}){y} =
	{0}$.
	
	\textit{(ii)} Since $r=n-p$, $Q'$
	has size $n \times n$. To show its invertibility, we show that
	$\mathrm{im}({Q}) \cap \mathrm{im}({B}_1) = \{{0}\}$.
	Since ${B}_2$ is surjective,
	the images of ${B}_1$ and ${B}_1 {B}_2 = {B}$
	coincide.
	Let ${Q} {x} = {B} {y} 
	\in \mathrm{im}({Q}) \cap \mathrm{im}({B}_1)$,
	with ${x} \in \QQ^p$ and ${y} \in \QQ^n$.
	This gives
	${Q}{x} 
	= ({Q} {W}_0^{-1} {P} - 
	{1}_n){y}= {Q} {W}_0^{-1} {P} {y} - 
	{y}$.
	Thus ${y} = 
	{Q} {W}_0^{-1} {P}{y} - {Q} {x}
	= {Q} {z}$
	with ${z}= {W}_0^{-1} {P} {y} - {x}$.
	Therefore,
	${Q}{x} = {B} {y} = {B} ({Q}{z}) = {0}$
	because ${B} {Q} = {0}$. 
\end{proof}

We now show that finding matrices $\{V_a\}_a$ such that there exist $\{G_a\}_a$ satisfying 
$\Delta_{ab} = V_a G_b - V_b G_a$ for every $a,b$ is 
sufficient to solve Problem $\DD$.
We view these matrices as being complementary to $\{W_a\}_a$
because they define themselves an instance of Problem $\DD$ 
with the same solution as $\{W_a\}_a$ 
(see the proof of Lem.~\ref{lem:existVaGa}).
This allows us to increase the rank of $Q$. 
We thus now formulate Problem $\DD'$.
 
\begin{defi}[Problem $\mathbb{D}'$]
	Let $n, t\geq 2$ and $2 \leq p < n$ be integers.
	For every $1 \leq a, b \leq t$, 
	let ${\Delta}_{ab} \in \QQ^{p \times p}$ be
	such that 
	${\Delta}_{ab} = {V}_a {G}_b - {V}_b {G}_a$
	for
	${V}_a \in \mathbb{Q}^{p \times (n-p)}$ of rank $n-p$
	and 
	${G}_a \in \QQ^{(n-p) \times p}$.
	The problem states as follows:
	Given the matrices ${\Delta}_{ab}$ for all 
	$1 \leq a,b \leq t$,
	compute such matrices ${V}_a$ for $a \in [t]$.
\end{defi}

The following proposition links  
Problem $\mathbb{D}$ and Problem $\mathbb{C}$.

\begin{prop}\label{prop:D'_solves_D}
	Let 
	${W}_0=PQ$ and 
	${W}_a=PU_aQ$ for $a \in [t]$ be as in 
	Problem $\DD$.
	For $1 \leq a,b \leq t$, 
	let ${\Delta}_{ab}$ be the matrices 
	defined in \eqref{eq:Delta_abPrD}.
	Moreover, assume that 
	\begin{itemize}
		\item [(i)] Problem $\mathbb{D}'$ is uniquely solvable for
		the input matrices 
		$\{{\Delta}_{ab} : 1 \leq a < b \leq t\}$
		and denote by $\{{V}_a: a \in [t]\}$ the unique
		solution.
		\item [(ii)] Problem $\mathbb{C}$ is uniquely solvable for 
		the input matrices 
		${W}_0' = [{W}_0|{0}_{p\times (n-p)}] \in \QQ^{p \times n}$ and 
		${W}_a' = [{W}_a | {V}_a] \in \QQ^{p \times n}$ for $a \in [t]$.
	\end{itemize}
	Then Problem $\mathbb{D}$ is uniquely 
	solvable on input  
	$\{{W}_a : 0 \leq a \leq t\}$ 
	and the unique solution is given by the unique
	solution to Problem
	$\mathbb{C}$ on input  
	$\{{W}_a' : 0 \leq a \leq t\}$.
\end{prop}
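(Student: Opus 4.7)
The plan is to reduce Problem $\mathbb{D}$ explicitly to the Problem $\mathbb{C}$ instance specified in (ii), with Lem.~\ref{lem:existVaGa} and Lem.~\ref{lem:propMatB} doing essentially all the work: the first identifies the $\{V_a\}$ from (i) with matrices of the form $PU_a B_1$, and the second shows that appending such a $B_1$ to $Q$ produces an invertible $n\times n$ matrix compatible with the proposed $W_0'$ and $W_a'$.

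First, I would apply Lem.~\ref{lem:existVaGa} to the original $P, Q, \{U_a\}$ to exhibit a specific solution of Problem $\mathbb{D}'$, namely ${V}_a = {P}{U}_a {B}_1$ and ${G}_a = {B}_2 {U}_a {Q}$, where ${B}_1 {B}_2 = {Q}{W}_0^{-1}{P} - 1_n$ is a rank-$(n-p)$ factorization. By the uniqueness part of assumption (i), the $\{V_a\}$ returned when solving Problem $\mathbb{D}'$ must be of this form, up to the natural right-symmetry $V_a \mapsto V_a M$ (with compensating $B_1 \mapsto B_1 M$, $G_a \mapsto M^{-1}G_a$) for some invertible $M \in \QQ^{(n-p)\times(n-p)}$.

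Next, I would set ${P}' := {P}$ and ${Q}' := [{Q} \mid {B}_1]$. Lem.~\ref{lem:propMatB}(i) gives ${P}{B}_1 = 0_{p \times (n-p)}$, hence ${W}_0' = [{W}_0 \mid 0] = {P}'{Q}'$, while Lem.~\ref{lem:propMatB}(ii) ensures ${Q}'$ is invertible. The same identities immediately yield ${W}_a' = [{W}_a \mid {V}_a] = [{P}{U}_a {Q} \mid {P}{U}_a {B}_1] = {P}' {U}_a {Q}'$. Hence $\{{W}_a' : 0 \le a \le t\}$ is a legitimate Problem $\mathbb{C}$ instance whose underlying diagonal matrices are exactly the $\{U_a\}$ of the original Problem $\mathbb{D}$ instance. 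Assumption (ii) together with Prop.~\ref{prop:Cprime_solves_C} then produces the unique tuples $\{(u_{1,i},\ldots,u_{t,i}) : i \in [n]\}$, which answers Problem $\mathbb{D}$.

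Finally, I need to argue uniqueness for Problem $\mathbb{D}$ itself. Any alternative decomposition ${W}_0 = {P}''{Q}''$, ${W}_a = {P}'' {U}_a'' {Q}''$ would, through Lem.~\ref{lem:existVaGa} applied to $({P}'', {Q}'')$, give a valid Problem $\mathbb{D}'$ solution on the same $\{\Delta_{ab}\}$; by uniqueness in (i) it agrees with $\{V_a\}$ up to the right-symmetry, and the associated Problem $\mathbb{C}$ instance is the same as $\{W_a'\}$ up to a corresponding replacement of $Q'$ by $[Q'' \mid B_1'' M]$, which is again invertible by Lem.~\ref{lem:propMatB}(ii). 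Prop.~\ref{prop:Cprime_solves_C} then forces the tuples of diagonal entries to coincide with those already produced. The main obstacle I anticipate is bookkeeping this right-symmetry carefully enough to see that it never breaks either the validity of the Problem $\mathbb{C}$ instance or the simultaneous diagonalization step; once this is tracked, everything else follows mechanically from the two lemmas.
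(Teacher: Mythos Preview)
Your proposal is correct and follows the same route as the paper: identify the $\{V_a\}$ with $PU_aB_1$ via Lem.~\ref{lem:existVaGa}, augment $Q$ to $Q'=[Q\mid B_1]$ via Lem.~\ref{lem:propMatB} to obtain a valid Problem~$\CC$ instance with the same diagonals $\{U_a\}$, and then invoke assumption~(ii). Your explicit tracking of the right-symmetry $V_a\mapsto V_aM$ is more careful than the paper (which simply absorbs this ambiguity into the non-unique choice of rank factor $B_1$), and your references to Prop.~\ref{prop:Cprime_solves_C} are unnecessary---assumption~(ii) already asserts unique solvability of the Problem~$\CC$ instance directly.
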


\begin{proof}
	By Lem.~\ref{lem:existVaGa} there exist 
	${V}_a \in \mathbb{Q}^{p \times r}$
	and ${G}_a \in \mathbb{Q}^{r \times p}$ for $a \in [t]$
	such that
	${\Delta}_{ab} = {V}_a {G}_b - {V}_b {G}_a$
	for all $1 \leq a < b\leq t$.
	Therefore the matrices $\{{\Delta}_{ab}\}_{a,b}$
	define an instance of Problem $\mathbb{D}'$. 
	By assumption \textit{(i)},
	we compute the unique 
	solution $\{{V}_a\}_a$ for this problem.
	
	Now, let
	${W}_0' = [{W}_0 | {0}_{p\times (n-p)}] \in \QQ^{p \times n}$
	and
	${W}_a' = [{W}_a | {V}_a] \in \QQ^{p \times n}$ for $a \in [t]$.
	Let $B=QW_0^{-1}P-1_n$ as in Lem.~\ref{lem:existVaGa} of rank $r=n-p$.
	Let ${B}_1 \in \QQ^{n \times r}$ and ${B}_2 \in \QQ^{r \times n}$
	be a rank factorization of ${B}$; i.e.
	${B} = {B}_1{B}_2$. 
	Letting 
	${Q}' := [{Q} | {B}_1] \in \QQ^{n \times n}$,
	we have 
	$
	{P}{Q}' = {P} [{Q}|{B}_1] = 
	[{W}_0 | {0}_{p \times r}] = {W}_0'
	$
	and, by uniqueness of $\{V_a\}_a$ (see proof of Lem.~\ref{lem:existVaGa}),
	$$
	{P}{U}_a {Q}' = {P}{U}_a [{Q}|{B}_1] = 
	[{W}_a | {V}_a] = {W}_a' 
	$$
	for $a \in [t]$,
	as ${P}{B}_1 = {0}_{n \times r}$ 
	by Lem.~\ref{lem:propMatB} \textit{(i)}.
	The matrix ${Q}'$ is invertible by Lem.~\ref{lem:propMatB} \textit{(ii)}. 
	Therefore, ${W}_0'$ and $\{{W}_a'\}_a$ 
	define a valid input for Problem $\mathbb{C}$.
	By assumption \textit{(ii)}, this problem is uniquely solvable
	and the solution must be the tuples of 
	diagonal entries of the matrices $\{{U}_a\}_a$.
	This is also a solution to Problem $\mathbb{D}$
	since the matrices $\{{U}_a\}_a$ 
	are the same for the input matrices 
	$\{{W}_a\}_a$ for Problem $\mathbb{D}$ and 
	$\{{W}_a'\}_{a}$ for Problem $\mathbb{C}$.
\end{proof}

\subsubsection{Solving Problem $\DD'$}

In view of Prop.~\ref{prop:D'_solves_D}, it remains to compute matrices 
$\{V_a\}_a$ from $\{\Delta_{ab}\}_{a,b}$.
We achieve this by standard linear algebra, and combining with 
Algorithm $\cA_\CC$ describes a full algorithm 
for Problem $\DD$.\\

From now on we assume $t \geq 3$. Let
$\Delta_{ab} = V_aG_b-V_bG_a$
for $1 \leq a,b \leq t$
as in
Problem $\DD'$. 
Let $r=n-p$ and $r_{ab}$ be the rank of $\Delta_{ab}$;
clearly, $r_{ab} \leq \min(2r,p)$.
We further assume $p > 2n/3$ (equivalently $2r<p$), which is a necessary condition
as otherwise the matrices 
$\Delta_{ab}$ likely have full rank and thus cannot reveal any information.
We define $\cK_{ab}:=\mathrm{im}(\Delta_{ab}) = \cK_{ba} \subseteq \QQ^p$
and 
$$\cK_a = \bigcap_{b \in [t],b\neq a} \cK_{ab} \ , \ a \in [t].$$
Let $\cV_a$ be the image of the matrix $V_a$ for $a \in [t]$. 
We first argue that, heuristically, 
$\mathcal{V}_a \subseteq \mathcal{K}_{ab}$ for every $b\neq a$.
Let $v \in \mathcal{V}_a$.
If there exists $x \in \QQ^p$ such that $v = V_aG_bx$ and $V_bG_a x= 0$
then $v=\Delta_{ab}x$, i.e.~$v \in \mathcal{K}_{ab}$.
Such an element $x$ must therefore lie in
$(x_0+\ker(V_aG_b))\cap \ker(V_bG_a)$,
where $x_0 \in \QQ^p$ is any vector such that $v=V_aG_b x_0$.
It is easy to see that this intersection is non-empty if 
$\ker(V_aG_b)+\ker(V_bG_a) = \QQ^p$.
Heuristically, as $\{V_a\}_a$ have rank $r$, 
$\ker(V_aG_b)+\ker(V_bG_a)$ has dimension at least $2(p-r)$;
accordingly we can heuristically 
expect that $\ker(V_aG_b)+\ker(V_bG_a) = \QQ^p$
as soon as $2(p-r) > p$, i.e. $p > 2n/3$.

We now justify that, heuristically
under a suitable parameter selection, 
$\cK_a = \cV_a$ for every $a\in [t]$.
For fixed $a \in [t]$, we compute $\cK_a$ modulo 
$\cV_a$ and consider 
$\overline{\cK_{ab}}:=
\cK_{ab}/\cV_a \subseteq\QQ^{p-r}$ for $b \neq a$.
Then $\cK_a = \cV_a$ if and only if 
$
\overline{\cK_a}:=\bigcap_{b\neq a} \overline{\cK_{ab}} = \{0\}
$.
Since $\cV_a$ has dimension $r$, $\overline{\cK_{ab}}$ has dimension 
$r_{ab}-r$.
For every $b \neq a$, we view $\overline{\cK_{ab}}$ as the kernel 
of $\QQ^{p-r} \to \QQ^{p-r}/\overline{\cK_{ab}}$, 
represented by a matrix $A_{ab} \in \QQ^{(p-r_{ab})\times (p-r)}$.
Therefore $\overline{\cK_a}$ is represented by an augmented
matrix 
$A_a = [A_{a1}|\ldots|A_{a,a-1}|A_{a,a+1}|\ldots|A_{at}]$
describing the kernel of 
$\QQ^{p-r} \to \bigoplus_{b\neq a} \QQ^{p-r}/\overline{\cK_{ab}}$.
The matrix $A_a$ has $\sum_{b\in [t], b \neq a} (p-r_{ab})$ rows and $p-r$ 
columns.
Now, $\cK_a=\cV_a$ if and only if $A_a$ has full rank;
and heuristically, we expect this to be the case as soon as 
$\sum_{b\in [t], b \neq a} (p-r_{ab}) \geq p-r$.

\begin{rem}\label{rem:prD}
		(i) 
		In fact, we expect that
		$r_{ab} = 2r$ for every $a,b$.
		Then, from what precedes, 
		we expect, heuristically that $\cK_a = \cV_a$ for every $a$, 
		if $(t-1)(p-2r) \geq p-r$,
		i.e.
		\begin{equation}\label{eq:heuristic_bound_pt_PbD}
		\frac{p}{n} \geq \frac{2t-3}{3t-5} \ \ \textrm{or, equivalently,} \ \
		t \geq \frac{2p-n}{3p-2n}+1  \ .
		\end{equation}
		
		(ii) We assumed $t \geq 3$ so that the 
		intersections $\{\cK_a\}_a$ 
		are well-defined. If $t=2$, $\cK_1$ coincides with the image 
		of ${\Delta}_{12}$, 
		which will not reveal  
		${V}_1$ and ${V}_2$. 

\end{rem}

We compute bases of $\{\cK_a\}_a$ 
by standard linear algebra. 
For the rest of this section, assume $\cK_a = \cV_a$ 
for every $a$,
and let ${C}_a$ be a basis matrix for $\cK_a$.
Thus, there exists ${M}_a \in \GL_{r}(\QQ)$
such that 
${V}_a = {C}_a  {M}_a$.
This gives for $a < b$:
\begin{equation}\label{eq:symsysNab}
{\Delta}_{ab} = {V}_a {G}_b - {V}_b {G}_a  
= {C}_a ({M}_a {G}_b) - {C}_b ({M}_b {G}_a) 
= {C}_a  {N}_{ab} - {C}_b  {N}_{ba}
\end{equation}
with ${N}_{ab} = {M}_a {G}_b$.
In \eqref{eq:symsysNab}, 
${\Delta}_{ab}$ and ${C}_a, {C}_b$ are known,
which allows to compute 
${N}^{(ab)} = [{N}_{ab}  | {N}_{ba}]^T$
as a solution to 
$
{\Delta}_{ab} = [{C}_a  |  -{C}_b] \cdot {N}^{(ab)}
$.
Once $\{N_{ab}\}_{a,b}$ are computed, 
we obtain a system of linear equations over $\QQ$, 
given by
\begin{equation}\label{eq:sys2}
{M}_a^{-1}\cdot {N}_{ab} = {G}_b     
\quad , \quad 1 \leq a < b \leq t \ .
\end{equation}

It has
$\frac{1}{2}t(t-1) r p$ equations
(there are $\frac{1}{2}t(t-1)$ 
choices for pairs $(a,b)$ and for each
pair the matrix equality gives $r p$ equations)
and
$t r^2 + trp = trn$
variables, given by the $tr^2$ entries of the 
matrices $\{{M}_a^{-1} : a \in [t]\}$
and the $trp$ entries of the matrices 
$\{{G}_b : b \in [t]\}$.
Heuristically, if $trn \leq \frac{1}{2}t(t-1) rp$,
i.e.
$
2n \leq (t-1)p 
$,
the system is expected to have a unique solution.
This bound is automatically 
satisfied if \eqref{eq:heuristic_bound_pt_PbD} holds.
This reveals $\{{M}_a : a \in [t]\}$
and thus $\{{V}_a : a \in [t]\}$
by computing $V_a=C_aM_a$.

\begin{prop}
	Assume that $\cK_a = \cV_a$ for every $a \in [t]$ (see Rem.~\ref{rem:prD} (i)).
	Then, a unique solution to Problem $\mathbb{D}'$ is implied by 
	the existence of a unique solution to the explicit system of linear equations given in  
	\eqref{eq:sys2}, which is a system
	of $\frac{1}{2}t(t-1)(n-p)p$ 
	linear equations in $t(n-p)n$ variables.
	There are at least as many equations as variables 
	as soon as $p(t-1) \geq 2n$.
\end{prop}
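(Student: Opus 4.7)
The plan is to follow the constructive derivation laid out in the paragraphs immediately preceding the statement. Granted $\cK_a = \cV_a$ for every $a$, I would first fix, for each $a \in [t]$, a basis matrix $C_a \in \QQ^{p\times r}$ of $\cK_a$ computed from the input $\{\Delta_{ab}\}_{a,b}$ by standard linear algebra. Since $\cV_a = \mathrm{im}(V_a)$ and $V_a$ has full column rank $r$, there exists a unique matrix $M_a \in \GL_r(\QQ)$ with $V_a = C_a M_a$. Substituting this into the defining relation $\Delta_{ab} = V_a G_b - V_b G_a$ produces the identity \eqref{eq:symsysNab}, from which I extract the inhomogeneous linear system $\Delta_{ab} = [C_a \,|\, -C_b]\cdot N^{(ab)}$ in the unknown $N^{(ab)} = [N_{ab}^T \,|\, N_{ba}^T]^T$; a solution exists by construction (with $N_{ab} = M_a G_b$), and any solution is admissible for what follows. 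Introducing the dummy variables $T_a$ playing the role of $M_a^{-1}$ then yields system \eqref{eq:sys2}: $T_a N_{ab} = G_b$ for $1 \leq a < b \leq t$.

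Next I would establish the uniqueness implication. Suppose \eqref{eq:sys2} admits a unique solution $(\{T_a\}, \{G_b\})$. Since the tuple $(\{M_a^{-1}\}, \{G_b\})$ arising from the hidden decomposition $V_a = C_a M_a$ is itself a solution, by uniqueness $T_a = M_a^{-1}$ for every $a$, and in particular $T_a$ is invertible. Recovering $V_a = C_a T_a^{-1}$ then yields the matrices $\{V_a\}_a$ of Problem $\DD'$, and the uniqueness of $(T_a)$ forces the uniqueness of $V_a$ (any feasible triple $(\{V_a\}, \{M_a\}, \{G_b\})$ satisfying the defining relations produces the same $T_a$, hence the same $V_a$). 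Note that although the auxiliary intermediate $N^{(ab)}$ is not necessarily unique (its uniqueness would require $\cV_a \cap \cV_b = \{0\}$), this is immaterial: any admissible choice of $\{N^{(ab)}\}$ enters \eqref{eq:sys2} consistently, and uniqueness there propagates to $V_a$.

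Finally I would carry out the counting. Each equation $T_a N_{ab} = G_b$ for a pair $a<b$ is an equality of $r\times p$ matrices, contributing $rp = (n-p)p$ scalar equations; summing over the $\binom{t}{2} = \tfrac{1}{2}t(t-1)$ ordered pairs yields $\tfrac{1}{2}t(t-1)(n-p)p$ equations. The unknowns are the $tr^2$ entries of $\{T_a\}_a \subset \QQ^{r\times r}$ plus the $trp$ entries of $\{G_b\}_b \subset \QQ^{r\times p}$, for a total of $tr(r+p) = trn = t(n-p)n$. The inequality $\tfrac{1}{2}t(t-1)(n-p)p \geq t(n-p)n$ simplifies, upon dividing by the positive quantity $t(n-p)$, to $(t-1)p \geq 2n$, giving the stated bound.

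I do not expect a serious obstacle here: the main conceptual point is only the rigidity that uniqueness in \eqref{eq:sys2} transports back to uniqueness of the $V_a$, which I would handle via the identification $T_a = M_a^{-1}$ above. The counting and the algebraic rearrangement of the threshold are routine.
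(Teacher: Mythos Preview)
Your proposal is correct and follows essentially the same approach as the paper: the proposition has no separate proof block in the paper but is a summary of the derivation in the paragraphs ``Solving Problem $\DD'$'', and you recapitulate exactly that derivation (choose bases $C_a$, write $V_a=C_aM_a$, extract $N_{ab}$ from \eqref{eq:symsysNab}, form system \eqref{eq:sys2}, and count). Your added remarks on the uniqueness transport $T_a=M_a^{-1}$ and on the possible non-uniqueness of $N^{(ab)}$ go slightly beyond the paper's level of detail but are consistent with it.
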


\subsection{Algorithm}
\label{ss:algo_D}
We refer to this Algorithm as Algorithm $\cA_\DD$ in the sequel.
\begin{center}
	\begin{minipage}{0.95\textwidth}
		\textit{Input}:
		A valid input for Problem $\DD$.\\
		\textit{Output}: 
		"Success" or "Fail", and in case of "Success", additionally output a 
		solution.
		"Success" means that the computed solution is unique;
		"Fail" means that a solution was not found.
		
		\begin{enumerate}
			\item Compute 
			${\Delta}_{ab}
			= {W}_a {W}_0^{-1} {W}_b - {W}_b
			{W}_0^{-1} {W}_a$
			for $1 \leq a\neq b \leq t$
			
			\item For $a \in [t]$, compute a basis matrix ${C}_a$ of
			$\cK_a:=\bigcap_{b \in [t], b \neq a} \mathrm{im}({\Delta}_{ab})$
			
			\item 
			\begin{enumerate}
				\item [(3.1)] 
				If $\dim(\cK_a) \neq n-p$ for all $a \in [t]$, then 
				output "Fail" and break
				\item [(3.2)] 
				Otherwise, for every $a < b$ compute  
				${N}_{ab}$
				as solutions to 
				${\Delta}_{ab} = [{C}_a | - {C}_b] \cdot 
				[{N}_{ab} | {N}_{ba}]^T$
			\end{enumerate}
			
			\item Solve for $\{M_a\}_a$ the system of linear equations
			${M}_a^{-1} {N}_{ab} = {G}_b$
			for $(a,b) \in [t]^2, a < b$.
			\begin{enumerate}
				\item [(4.1)] If a unique solution is not found, 
				then output "Fail" and break
			\end{enumerate}
			\item Compute the matrices $\{{V}_a : a \in [t]\}$
			as ${V}_a = {C}_a \cdot {M}_a$
			
			\item Run Algorithm $\mathcal{A}_{\mathbb{C}}$
			on the matrices
			${W}_0 ' = [{W}_0 | {0}]$
			and
			${W}_a ' = [{W}_a | {V}_a]$
			for $a \in [t]$
			and return its output.
		\end{enumerate}
	\end{minipage}
\end{center}

\begin{rem}
	Problem $\DD$ of Def.~\ref{def:prABCD} is symmetric in the sense that
	$P$ and $Q$ have the same rank.
	An asymmetric variant consists in having $P$ and $Q$ of ranks $p\neq q$.
	Our algorithm adapts to that case: if $p<q$, then "cutting" the last $q-p$
	columns of $\{W_a\}_a$ means "cutting" the last $q-p$
	columns of $Q$, which reduces to the symmetric case.
	This approach is however not very genuine, as it "cuts" information
	instead of possibly exploiting it. We leave it open to find a better 
	algorithm.
\end{rem}

\subsection{Optimization of the parameters}
\label{ss:optimize_D}

We find minimal possible values for $t$ and $p$ with respect to a given $n$.
In Sec.~\ref{ss:CLT13} we will see that it is of interest
to minimize $2p+t$ 
in order to minimize the number of public encodings in  \cite{CLT13}.
According to \eqref{eq:heuristic_bound_pt_PbD}, 
the main (heuristic) condition to be ensured is 
$p \geq \frac{2t-3}{3t-5}n$.
We set $F_n(t)=2p_n(t)+t=\frac{2t-3}{3t-5}n+t$ 
for $t \in \RR_{>0}\backslash \{5/3\}$ and $n\geq 2$.
Then $F_n$ has a minimum at $t_0=\frac{1}{3}(\sqrt{2n}+5)$, with 
$p_n(t_0)=\frac{2}{3}n+\frac{1}{3\sqrt{2}}\sqrt{n}$
and $F_n(t_0)=\frac{4}{3}n+\frac{2}{3}\sqrt{2n}+\frac{5}{3}$.
In conclusion, we expect Algorithm $\cA_\DD$ to succeed 
for $p=\lceil p_n(t_0) \rceil$ and 
$t = \lceil t_0 \rceil$. 

\section{Applications}
\label{s:applications}

We describe two applications for our algorithms and obtain
significant improvements on previous works. 

\subsection{Improved algorithm for the CRT-ACD Problem}
\label{ss:crtacd}

We consider the following "multi-prime" 
version of the Approximate 
Common Divisor Problem \cite{corper} 
based on Chinese Remaindering:

\begin{defi}[{\sf{CRT-ACD}} Problem]\label{def:crtacd}
	Let $n, \eta, \rho \in \ZZ_{\geq 1}$.
	Let $p_1,\ldots,p_n$
	be distinct $\eta$-bit prime numbers
	and $M=\prod_{i=1}^{n} p_i$.
	Consider a non-empty finite set
	$\mathcal{S}$ of integers in $\ZZ\cap[0,M)$
	such that for every $x \in \mathcal{S}$:
	$$x \equiv x_{i} \pmod{p_i} \ , \quad 1 \leq i \leq n $$
	for uniformly distributed integers $x_{i} \in \ZZ$ satisfying
	$|x_{i}|\leq 2^\rho$.
	
	The {\sf{CRT-ACD}} problem states as follows:
	given the set $\mathcal{S}$,
	the integers $\eta,\rho$ and $M$, factor $M$ completely
	(i.e. find the prime numbers $p_1,\ldots,p_n$).
\end{defi}

Clearly, the larger the set $\cS$, the more information
one can exploit to factor $M$. 
Our interest is therefore to minimize the 
cardinality of the set $\cS$ with respect to $n$.

\subsubsection{The algorithm of \cite{corper}}
\label{ss:coron_pereira}
Coron and Pereira propose an algorithm for the case $\#\cS=n+1$.
They proceed in two steps called the "orthogonal lattice attack" following \cite{ns99}
and the "algebraic attack" following \cite{cheon15}.
We briefly review their algorithm; for a complete description we refer to 
\cite[Se. 4.3]{corper}.\\

Let $\cS=\{x_1,\ldots,x_n,y\}$ and $x = (x_1,\ldots,x_n) \in \cS^n$.
Then, the vector 
$b = (x,y\cdot x) \in \ZZ^{2n}$ is public, and
by the Chinese Remainder Theorem, letting 
$x \equiv x^{(i)} \pmod{p_i}$
and $y \equiv y^{(i)} \pmod{p_i}$ 
for all $i \in [n]$, 
one has 
$b \equiv \sum_{i=1}^n c_i (x^{(i)},y^{(i)}x^{(i)}) =: \sum_{i=1}^n c_i b^{(i)} \pmod{M}$,
for some integers $c_1,\ldots,c_n$.
If the vectors $\{x^{(i)}\}_i$ are $\RR$-linearly independent, then so are 
$\{b^{(i)}\}_i$ and generate a $2n$-dimensional lattice $\calL$ of rank $n$.
Importantly, by Def.~\ref{def:crtacd}, 
the vectors $\{b^{(i)}\}_i$ are reasonably short vectors 
(of $\ell_2$-norm approximately $2^{2\rho}$; and $\rho$ is considered 
much smaller than $\eta$).

The "orthogonal lattice attack" is an algorithm, which on input $b$, 
outputs a basis of the completion 
$\overline{\calL}=
\calL \otimes_{\ZZ} \QQ$ of $\calL$, performing lattice reduction
on the lattice $\langle b \rangle^{\perp_M}$ 
of vectors $v \in \ZZ^m$ such that $\langle v,b\rangle \equiv 0\pmod{M}$.
The parameters are chosen accordingly,
and one essentially requires $2\rho < \eta$.

Upon finding a basis $\{b'^{(i)}\}_i$ of $\overline{\calL}$, 
Coron and Pereira proceed with the "algebraic attack". 
The bases $\{b'^{(i)}\}_i$ of $\overline{\calL}$ and $\{b^{(i)}\}_i$ of $\calL$
are related via an unknown invertible base change matrix $Q \in \QQ^{n \times n}$. 
Letting $P = [x^{(1)} | \ldots | x^{(n)}] \in \ZZ^{n \times n}$ with columns 
$\{x^{(i)}\}_i$, one obtains matrix relations
\begin{equation}\label{eq:matrix_crtacd}
W_0 = P\cdot Q \ , \ W_1 = P \cdot U_1 \cdot Q
\end{equation}
where $U_1$ is $n \times n$ diagonal with entries $\{y^{(i)}\}_i$.
The matrix $W_0$ is invertible (over $\QQ$) and one computes 
the eigenvalues $\{y^{(i)}\}_i$ of
$W_1 W_0^{-1} = P  U_1  P^{-1}$.
Using $y \equiv y^{(i)} \pmod{p_i}$, one factors $M$ by 
computing greatest common divisors.

\subsubsection{A naive improvement}
There is a naive generalization of \cite{corper} using only $\cO(\sqrt{n})$ 
public instances in $\cS$. 
However, we argue that this approach gives a worse range of parameters
when combined with \cite{corper}.

For integers $p \geq 2$ and $t \geq 1$ of size $\cO(\sqrt{n})$, let 
$x = (y_1\cdot z, \ldots, y_t\cdot z) \in \ZZ^{tp}$ 
of dimension $\cO(n)$
for $y_1,\ldots,y_t \in \cS$ and $z \in \cS^p$.
This variant reduces $\#\cS$ considerably, as 
$\#\cS = p+t = \cO(\sqrt{n})$.
However, \cite{corper} requires to construct 
the vector $b=(x,y\cdot x)$ for $y \in \cS$.
This gives rise to residue vectors $\{b^{(i)}\}_i$ of approximate $\ell_2$-norm 
$2^{3\rho}$ instead of $2^{2\rho}$ as in \cite{corper}.
Therefore the stronger condition $3\rho < \eta$ will be required for 
the orthogonal lattice attack to succeed.
In our improvement, we would like to lower $\#\cS$
while continuing to use $2\rho < \eta$, as in \cite{corper}.

\subsubsection{Our improved algorithm}
\label{ss:improved_crtacd}

We recognize that \eqref{eq:matrix_crtacd} defines an instance of 
Problem $\mathbb{A}$ of 
Def.~\ref{def:prABCD} with $t=1$ because $P$ and $Q$ have rank $n$.
Our improvement lies in generalizing the vector $b$ as to obtain 
an instance of Problem $\CC$. 

We consider $\#\cS < n+1$ and write for convenience 
$\cS=\{x_1,\ldots,x_p,y_1,\ldots,y_t\}$ with integers $2 \leq p < n$ and $2 \leq t < n$ 
satisfying $2n \leq (t+1)p$.
We let $x = (x_1,\ldots,x_p) \in \cS^p$ and 
$b = 
(x, y_1 \cdot x, \ldots, y_t \cdot x) \in \ZZ^{(t+1)p}$.
As previously, 
let $\{b^{(i)}\}_i$ denote the short
residue vectors modulo the primes $\{p_i\}_i$
and $x \equiv x^{(i)} \pmod{p_i}$, $y_a \equiv y_a^{(i)} \pmod{p_i}$
for $a \in [t]$ and $i \in [n]$.
By the Chinese Remainder Theorem, 
we observe that $b$ lies in the 
lattice $\calL = \bigoplus_{i = 1}^n \ZZ b^{(i)}$
modulo $M$. Namely, there are integers $c_1,\ldots,c_n$ such that
\begin{equation*}
b \equiv \sum_{i=1}^n c_i
\begin{bmatrix}
x_i \\  y_{1}^{(i)} \cdot x^{(i)} \\ \vdots \\ y_{t}^{(i)}\cdot x^{(i)}
\end{bmatrix}
= :
\sum_{i=1}^n c_i b^{(i)}
\pmod{M} \
\end{equation*}
As in \cite{corper}, the orthogonal lattice algorithm reveals a basis
$\{b'^{(i)}\}_i$ of $\overline{\calL}$
and 
the $\ell_2$-norm of $\{b^{(i)}\}_i$ is still approximately $2\rho$.

Contrary to \eqref{eq:matrix_crtacd}, we now derive matrix equations
\begin{equation}\label{eq:improved_matrix_crtacd}
W_0 = P\cdot Q \ , \ W_a = P \cdot U_a \cdot Q \ , \ a \in [t]
\end{equation}
where $P \in \ZZ^{p \times n}$ has columns $\{x^{(i)}\}_i$ 
and $\{U_a\}_a$ are $n \times n$
diagonal with entries $\{y_a^{(i)}\}_{a,i}$.
The matrix $Q$ is a base change matrix from $\{b'^{(i)}\}_i$ to $\{b^{(i)}\}_i$.
If $W_0$ has rank $p$, Eq.~\eqref{eq:improved_matrix_crtacd}
now defines a valid input for Problem $\CC$ of Def.~\ref{def:prABCD}
and Algorithm $\cA_{\CC}$ from Sec.~\ref{s:algo_C} 
reveals the diagonal entries $\{y_a^{(i)}\}_{a,i}$ of the matrices 
$\{U_a\}_a$.
One can then factor $M$ by computing $\gcd(y_a-y_{a}^{(i)},M)$.
 
From Sec.~\ref{ss:optimize_C} we see that 
$\#\cS = p+t$ is 
minimized for $p=\lceil \sqrt{2n}\rceil$ and 
$t+1=\lceil \sqrt{2n} \rceil$.
Thus, $\#\cS=2 \lceil \sqrt{2n} \rceil = \cO(\sqrt{n})$.
In summary, letting $n$ be the number of secret primes in the 
public modulus $M$, we can factor $M$ given only 
$\cO(\sqrt{n})$ input samples, 
whereas \cite{corper} uses $\cO(n)$.

\ignore{
\paragraph{\itshape{Parameter Selection and 
	Asymptotic Complexity}}
The parameters and asymptotic time complexity
of the algorithm are obtained by analyzing lattice reduction on the 
lattice $\langle b \rangle^{\perp_M}$. 
Herefore, standard algorithms such as LLL \cite{lll82} or BKZ
\cite{terminatingBKZ} are used.
Using BKZ with block-size 
$\beta=\omega(1/\log(\iota))$, a Hermite factor $\iota$ is heuristically 
achieved within time $2^{\Omega(1/\log(\iota))}$, \cite{hujia16}.
In our case, with $p=t=\cO(\sqrt{n})$, 
the vector $b$ (and thus the lattice $\calL$) 
has dimension $p(t+1)=\cO(n)$ as in \cite{corper}.
Similarly, the vectors $\{b^{(i)}\}$ have $\ell_2$-norm
$\Vert b^{(i)} \Vert = \Vert x^{(i)} \Vert \cdot (1+\sum_{a \in [t]} (y_a^{(i)})^2)^{1/2} \leq \sqrt{p} \cdot 2^{\rho} (1+t 2^{2\rho})^{1/2}$,
which is again approximately $2^{2\rho}$.
Based on \cite{hujia16}, we therefore obtain as in \cite{corper}, 
a heuristic time complexity of $2^{\Omega(\gamma/\eta^2)}$.
}

\begin{rem}
	We remark that we do not impact the security of the key-exchange 
	from \cite{corper},
	as it uses certain encodings of matrices. However,  
	the product of matrices does not commute, so our techniques do not apply to that case.
\end{rem}

\subsection{Improved Cryptanalysis of CLT13 Multilinear Maps}
\label{ss:clt13}
We consider the CLT13 Multilinear 
Map Scheme by Coron {\sl et al.}, \cite{CLT13}.
Cheon {\sl et al.} \cite{cheon15}
described a polynomial-time attack against the Diffie-Hellman key exchange based on CLT13
when enough encodings of zero are public.
Such encodings are for instance public in the rerandomization procedure.
It is of interest to investigate this cryptanalysis 
when only a limited number of such encodings is available.
Namely, not every CLT13-based construction necessarily 
reveals enough such encodings and the attack of 
Cheon {\sl et al.} is prevented.

\subsubsection{CLT13 Multilinear Maps}
\label{ss:CLT13}

The CLT13 Multilinear Map is a construction over the integers based on the 
notion of graded encoding scheme \cite{ggh13}.
Its hardness relies on Chinese Remainder-representations 
and factorization.
We fix an integer $n \geq 2$, thought of as a dimension for CLT13. 
The message space is $\bigoplus_{i = 1}^n \ZZ/g_i \ZZ$
for some small secret primes $\{g_i\}_i$.
The encoding space has a graded structure 
and supports homomorphic addition and multiplication.
It is defined over $\bigoplus_{i=1}^n \ZZ/p_i\ZZ$
for large secret primes $\{p_i\}_i$
with public product $x_0 = \prod_i p_i$.
More precisely, an encoding of a 
message $m = (m_i)_i \in \bigoplus_{i = 1}^n \ZZ/g_i \ZZ$ at level $k \in [\kappa]$ 
(where $\kappa$ denotes the multilinearity degree)
is an integer $c$ such that
$c \equiv (r_i g_i + m_i)\cdot z^{-k} \pmod{p_i}$ for all $i \in [n]$
where $z \in (\ZZ/x_0\ZZ)^{\times}$ and $r_i$ is a random "small" noise. 
By the Chinese Remainder Theorem $c$ is computed modulo 
$x_0$.
For encodings $c$ at the last level $\kappa$, a public
zero-testing procedure allows to test if $c$ encodes zero. 
This procedure works by computing 
$\omega(c) := p_{zt} \cdot c$
for a public parameter $p_{zt} \in \ZZ/x_0\ZZ$.
Then $c$ encodes the zero message if $\omega$
is ''small'' compared to $x_0$.
In \cite{CLT13}, one actually defines a vector of $n$ zero-test parameters 
$\{p_{zt, i} : i \in [n]\}$ to define a proper zero-testing.
For the precise parameter setting, 
we refer to \cite[Sec. 3.1]{CLT13}.

\subsubsection{Cryptanalysis}
\label{ss:CryptCLT13}

The algorithm from \cite{cheon15} 
reveals all secret parameters given sufficiently
many encodings of zero. 
We briefly recall the attack here, and
for simplicity of exposition, assume $\kappa=3$.
Consider sets $\cA=\{\alpha_j : j \in [n]\}$, 
$\cB = \{\beta_1,\beta_2\}$
and $\cC=\{\gamma_k : k \in [n]\}$
of encodings at level $1$ and where all encodings in $\cA$ encode zero. 
Therefore, there are $\#\cA = n$ public encodings of zero 
and $\#(\cA\cup\cB\cup\cC) = 2n+2$ encodings in total.
In the previous notation, we write 
$\alpha_{j} \equiv \alpha_{ji}/z \pmod{p_i}$,
$\beta_a \equiv \beta_{ai}/z \pmod{p_i}$
and $\gamma_k \equiv \gamma_{ki}/z \pmod{p_i}$
for all $i,j,k \in [n]$ and $a \in [2]$.
Because the products 
$\alpha_j \beta_a \gamma_k$
encode zero at level $3$,
correct zero-testing ensures that the zero-test equations
$\omega_{jk}^{(a)} = p_{zt}(\alpha_j\beta_a\gamma_k)$,
given by
\begin{eqnarray*}
	\omega^{(a)}_{jk} =
	\sum_{i=1}^n p_{zt,i} \alpha_{ji} \beta_{ai} \gamma_{ki} =
	\begin{bmatrix}
		\alpha_{j1} \cdots \alpha_{jn}
	\end{bmatrix}
	\begin{bmatrix}
		\beta_{a1} p_{zt,1} &  &  \\
		& \ddots & \\
		& & \beta_{an}p_{zt,n}
	\end{bmatrix}
	\begin{bmatrix}
		\gamma_{k1} \\ \vdots \\ \gamma_{kn} 
	\end{bmatrix} 
\end{eqnarray*}
for certain explicit integers $p_{zt,i}$ for $i \in [n]$
defining the zero-test parameter, 
hold over $\ZZ$ instead of $\ZZ/x_0\ZZ$. 
Writing these relations out for all indices $(j,k) \in [n]^2$,  
the $n \times n$ matrices $W_a:=(\omega_{jk}^{(a)})_{j,k \in [n]}$ for 
$a=1,2$ satisfy
\begin{equation}\label{eq:cheon}
W_a = P \cdot U_a \cdot Q
\end{equation}
for secret matrices 
$P,Q$ of full rank $n$ 
(corresponding to encodings of $\cA$ and $\cC$, respectively)
and diagonal matrices $\{U_a\}_a$ 
containing the elements $\{\beta_{ai}: i \in [n]\}$.
If
at least one of ${W}_1, {W}_2$
is invertible over $\QQ$ (say ${W}_2$),
the attacker computes the eigenvalues 
$\{\beta_{1i}/\beta_{2i}:i \in [n]\}$ of
$W_1W_2^{-1}$.  
These ratios are
enough to factor $x_0$. 
Indeed, letting $\beta_{1i}/\beta_{2i}=x_i/y_i$
for coprime integers $x_i,y_i$ and using 
$\beta_a \equiv \beta_{ai}/z
\pmod{p_i}$, we deduce
$ x_i \beta_{2} - y_i \beta_{1}
 \equiv (x_i \beta_{2i} - y_i \beta_{1i})/z \equiv 0
\pmod{p_i}$ for $i \in [n]$
and therefore 
$\gcd(x _i \beta_{2} - y_i \beta_{1},x_0)=p_i$
with high probability.

In summary, the Cheon {\sl et al.} attack recovers all secret primes
$\{p_i\}_i$ in polynomial time given the set $\mathcal{A}$
of level-one encodings of zero 
and the sets $\mathcal{B}$ and $\mathcal{C}$.\\

\subsubsection{Attacking CLT13 with fewer encodings}
\label{ss:improve_clt13}

We consider the following CLT13-based problem.

\begin{defi}[CLT13 Problem]\label{def:clt13pb}
	Let $n \geq 2$ be the dimension of {\upshape CLT13} 
	and $x_0=\prod_{i=1}^n p_i$.
	Let $\mathcal{E}$ be a finite non-empty set of encodings at level $1$
	and $\mathcal{E}_0 \subseteq \mathcal{E}$ a non-empty subset
	such that every element of $\mathcal{E}_0$ is an encoding of zero.
	The {\upshape CLT13} Problem is as follows:
	Given the sets $\mathcal{E}$ and $\mathcal{E}_0$, factor $x_0$.
\end{defi}

We refer to $\mathcal{E}$ and $\mathcal{E}_0$ as the sets of 
"available encodings" and "available encodings of zero", respectively.
It is not a loss of generality to consider level-one encodings.
As in \cite{cheon15}, we write
$\mathcal{E}=\mathcal{A} \cup \mathcal{B} \cup \mathcal{C}$ 
with $\mathcal{A} \subseteq \mathcal{E}_0$.
As recalled above, 
\cite{cheon15} requires $\#\mathcal{E}_0 \geq n$
to factor $x_0$, and a total number of public encodings
$\#\mathcal{E} = 2n + 2$.

We aim at reducing the number of encodings 
needed for the factorization of $x_0$
and treat the following questions independently: 

\begin{enumerate}
	\item [(i)]
	Factor $x_0$ with fewer  
	available encodings of zero,
	i.e.  $\#\mathcal{E}_0 < n$
	
	\item [(ii)]
	Factor $x_0$ with fewer available encodings, i.e.
	$\#\mathcal{E} < 2n+2$\\
\end{enumerate}

\paragraph{\itshape{A naive improvement.}}
As for the CRT-ACD Problem, 
there is a naive improvement using fewer encodings, but 
assuming $\kappa = 4$.
One can form product encodings 
$\alpha_j \beta_a \gamma_k \delta_{\ell}$ at level $4$,
where every encoding
is at level $1$.
These can be partitioned into sets 
$\mathcal{A}, \mathcal{B}$ and $\mathcal{C}$ 
such that $\mathcal{A}$ corresponds to
encodings of zero 
with $\#\mathcal{A}= \mathcal{O}(\sqrt{n})$.
However, this approach has the inconvenience 
of using $\kappa=4$
and 
our improved attack aims at lowering 
the number of public encodings 
while $\kappa = 3$. \\

\paragraph{\itshape{Minimizing the number of encodings of zero}}
We explain how to use Algorithm $\cA_{\CC}$ 
to factor $x_0$ using only $\#\cE_0=\cO(\sqrt{n})$ level-one 
encodings of zero.\\

We fix integers $2 \leq p < n$ and $3 \leq t < n$
and assume again $\kappa=3$.
As in \cite{cheon15}, we write 
$\cE=\cA \cup \cB \cup \cC$ 
with $\cA \subseteq \cE_0$.
We let $\#\cA = p$, $\#\cB = t$ and $\#\cC = n$; 
and claim $p = \cO(\sqrt{n})$.

Every product encoding $c = \alpha_j \beta_a \gamma_k$ with $(\alpha_j,\beta_a,\gamma_k) \in \cA \times \cB \times \cC$
is an encoding of zero
and by correct zero-testing we obtain 
integer matrix relations 
\begin{equation}\label{eq:matrix_improved_clt13}
W_a = P \cdot U_a \cdot Q \quad , \quad a \in [t]
\end{equation}  
for  
$P \in \ZZ^{p \times n}, 
Q \in \ZZ^{n \times n}$
corresponding to encodings 
in $\cA$ and $\cC$, respectively,
and diagonal matrices $\{U_a\}_a$ corresponding to $\cB$.
Exactly as in \cite{cheon15}, the matrices $\{U_a\}_a$
contain integers $\beta_{ai}$ such that 
$\beta_a \equiv \beta_{ai} \pmod{p_i}$
for $i \in [n]$.
With high probability the ranks of $P$ and $Q$ are $p$ and $n$, respectively.
Defining $W_0' = W_1$ and $W_a' = W_{a-1}$ for 
$2 \leq a \leq t$ we obtain an
instance similar to Problem
$\CC$ of Def.~\ref{def:prABCD},
but without a "special input matrix" $PQ$ (see Sec.~\ref{ss:remarks_defABCD}).
Using Algorithm $\cA_\CC$, we 
reveal eigenvalues (the diagonal entries) 
of the matrices $\{U_a U_1^{-1}\}_a$
as it is likely that $U_1$ be invertible.
We finally deduce the prime factorization of $x_0$ 
by taking greatest common divisors, as in \cite{cheon15}.

By the optimization in Sec.~\ref{ss:optimize_C},
we choose 
$t = \lceil \sqrt{2n} \rceil$ and
 $\#\cA=p=\lceil\sqrt{2n}\rceil$.
\\
\paragraph{\itshape{Minimizing the total number of encodings}}
We now explain how to use Algorithm 
$\cA_\DD$ to factor $x_0$ using 
$\#\cE = \frac{4}{3}n + \cO(\sqrt{n})$
instead of $\#\cE = 2n+2$ as in \cite{cheon15}.\\

Contrary to the previous case, we 
now use a set $\cC$ with $\#\cC = p$;
so $\#\cE = 2p+t$.
It is now direct to see 
that upon correct zero-testing
we derive equations as in \eqref{eq:matrix_improved_clt13}
but with $Q \in \ZZ^{n \times p}$ instead.
Thus, if both $P$ and $Q$ have rank $p$,
we obtain Problem $\DD$ of Def.~\ref{def:prABCD} without 
"special input matrix" $W_0$.
Then Algorithm $\cA_\DD$ reveals
ratios of diagonal entries of $\{U_a U_1^{-1}\}$
and we consequently factor $x_0$.

Following Sec.~\ref{ss:optimize_D}, we are led 
to minimize $\#\cE(n) = 2p+t$ as a function of $n$. 
We can let  
$
p = \lceil \frac{2}{3}n + \frac{1}{3\sqrt{2}}\sqrt{n} \rceil $
and
$
t = \lceil \frac{1}{3}\sqrt{2n} + \frac{5}{3} \rceil
$
and obtain
$$
\#\mathcal{E}(n) = 2\Bigl\lceil \frac{2}{3}n + 
\frac{1}{3\sqrt{2}}\sqrt{n} \Bigr\rceil + \Bigl\lceil \frac{1}{3}\sqrt{2n} + \frac{5}{3} \Bigr\rceil = \frac{4}{3}n + \mathcal{O}(\sqrt{n}) \ .
$$

\paragraph{\itshape{Cryptanalysis with independent slots}}
In \cite{cltindslots}, Coron and Notarnicola
cryptanalyze CLT13 when no encodings of zero are available beforehand, 
but instead only "partial-zero" encodings.
Messages are non-zero modulo a product 
of several primes $g_1 \cdots g_{\theta}$
for some integer $\theta \in [n]$.
We can 
improve this cryptanalysis following the same techniques as above.
Let $\ell$ the number of partial-zero encodings.
Since \cite{cltindslots} is based on the algorithm of 
Cheon {\sl et al.}~to factor $x_0$,
we can now replace it by 
Algorithm $\cA_\CC$ once 
$\ell$ encodings of zero are created.
This means that we can set $\ell = \cO(\sqrt{n})$,
which brings a twofold improvement:
first, lattice reduction (in the orthogonal lattice attack \cite[Sec. 4]{cltindslots}) 
is only run on a lattice of dimension $\cO(\sqrt{n})$;
and second, 
the number of partial-zero encodings is reduced to 
$\cO(\sqrt{n})$.

\section{Computational Aspects and Practical Results}
\label{s:practical}

We describe practical parameters for algorithms 
$\cA_\CC$ and $\cA_\DD$.
We have implemented our algorithms in SageMath \cite{sage}; 
our source code is provided in
\url{https://pastebin.com/Yg6QgZTh}. 
Our experiments are done on a standard $3,3$ GHz Intel Core i7 processor.

\subsection{Instance Generation of Problems $\CC$ and $\DD$}

As is the case for applications in cryptanalysis, we consider matrices with integer entries.
To generate instances of Problems $\CC$ and $\DD$,
given fixed parameters $n,t,p$,
we uniformly at random generate matrices 
$P,Q$ and $\{U_a\}_a$ with 
entries in $[-k,k]\cap \ZZ$ for some $k \in \ZZ_{\geq 1}$ as in 
Def.~\ref{def:prABCD}.
We set $W_0=PQ$ and $W_a=PU_aQ$ for $a \in [t]$
to obtain instances of Problems $\CC$ or $\DD$.

We perform the linear algebra 
over $\ZZ/\ell\ZZ$ for a large prime $\ell$,
instead of over $\QQ$.
It suffices to choose $\ell$ slightly larger than the diagonal 
entries of $\{U_a\}_a$ 
(e.g. $\ell = \cO(\max_{a,i}|u_{ai}|)$, 
where $u_{ai}$ for $i \in [n]$ 
denote the diagonal entries of $U_a$).
The running time depends on the entry size of the
generated matrices.
The overall computational cost of our algorithms $\mathcal{A}_{\CC}$ and 
$\mathcal{A}_{\DD}$ 
is dominated by the 
cost of solving systems of linear equations and performing 
simultaneous diagonalization,
which can be done by standard algorithms for non-sparse linear algebra.

\subsection{Practical Experiments}

We gather practical parameters for 
problems $\CC$ and $\DD$, 
and for our applications of Sec.~\ref{s:applications}.
We compare $p,t$ with the theoretical 
values $p_0(n),t_0(n)$ obtained in Sec.~\ref{s:algo_C} and \ref{s:algo_D}. 
For Table 1, $p_0(n) = \lceil{\sqrt{2n}}\rceil$
and $t_0(n) = \lceil \sqrt{2n}-1\rceil$.
For Table 2, 
$p_0(n)=\lceil \frac{2}{3}n + \frac{\sqrt{n}}{3\sqrt{2}}\rceil$
and $t_0(n) = \lceil \frac{1}{3}(\sqrt{2n}+5)\rceil$.
Here "entry size" is an approximation of the bit-size of the 
max-norm of each input matrix. 

In Table 3, we compare our work with \cite{corper} for the 
CRT-ACD Problem
and with \cite{cheon15} for the cryptanalysis of CLT13.
We give parameters for obtaining 
a complete factorization
of $M$ (in CRT-ACD) and $x_0$ (in CLT13) of approximate bit-size $n\eta$.
For CRT-ACD, the column "this work" equals $\#\cS=p+t$ (Series~1).
For CLT13, "this work" shows $\#\cE = 2p+t$ (Series~2)
and $\#\cE_0 = p$ (Series~3).
For example, for $n=50$, our algorithm factors $M$ (in CRT-ACD) 
using only $19$ public samples, whereas \cite{corper} requires $51$ samples; 
and similarly breaks CLT13 with only 
$10$ public encodings of zero, while \cite{cheon15} uses $50$.

In conclusion, these practical experiments overall confirm our theory, as well as
the quadratic improvement over \cite{corper} and \cite{cheon15}.

\begin{table}\label{tab:practical_C}
\begin{center}
	\begin{tabular}{|c||c||c|c||c|c||c|}\cline{3-6}
		\multicolumn{2}{c|}{}& \multicolumn{2}{|c|}{~Practice~} &
		\multicolumn{2}{|c|}{~Theory~} & \multicolumn{1}{c}{}\\ \hline
		~$n$~ 	&~Entry size~ & 	\   ~$p$~  \       &   
		~$t$~      &       \  ~$p_0(n)$~  \     &    
		~$t_0(n)$~    &   \multicolumn{1}{|c|}{~Running time~}    \\ \hline
		~$15$~ &~$1000$~&        ~$6$~        &      ~$4$~         &       ~$6$~         &     ~$5$~ &  $4$ min $4$ s   \\\hline
		~$25$~ &~$750$~&        ~$8$~        &      ~$7$~         &       ~$8$~         &     ~$7$~ &  $3$ min $45$ s    \\\hline
		~$50$~ &~$600$~&        ~$10$~        &      ~$9$~         &       ~$10$~         &  ~$9$~ &  $4$ min $34$ s    \\\hline
		~$100$~ &~$200$~&        ~$15$~        &      ~$14$~         &       ~$15$~      &    ~$14$~ &  $1$ h $17$ min    \\\hline
		~$150$~ &~$100$~&        ~$18$~        &      ~$16$~         &       ~$18$~       &      ~$17$~ &  $6$ h $29$ min    \\\hline
		~$500$~ & ~$20$~ &        ~$32$~        &      ~$31$~         &       ~$32$~       &    ~$31$~ &  $29$ min $3$ s    \\\hline
	\end{tabular}
\end{center}
\caption{Experimental data for Algorithm $\cA_\CC$}
\end{table}

\begin{table}\label{tab:practical_D}
\begin{center}
	\begin{tabular}{|c||c||c|c||c|c||c|}\cline{3-6}
		\multicolumn{2}{c|}{}& \multicolumn{2}{|c|}{~Practice~} &
		\multicolumn{2}{|c|}{~Theory~} & \multicolumn{1}{c}{}\\ \hline
		~$n$~ 	&~Entry size~ & 	\ ~$p$~ \      &    ~$t$~      &       \ ~$p_0(n)$~  \     &    
		 ~$t_0(n)$~    &   \multicolumn{1}{|c|}{~Running time~}    \\ \hline
		~$15$~ &~$1000$~&        ~$11$~        &      ~$4$~         &       ~$11$~         &     ~$4$~ &  $4$ min $2$ s  \\\hline
		~$25$~ &~$750$~&        ~$18$~        &      ~$4$~         &       ~$18$~         &     ~$5$~ &   $1$ min $54$ s  \\\hline
		~$50$~ &~$600$~&        ~$35$~        &      ~$5$~         &       ~$35$~         &  ~$5$~ &  $1$ min $39$ s  \\\hline
		~$100$~ &~$200$~&        ~$70$~        &      ~$7$~         &       ~$70$~      &    ~$7$~ &    $5$ min $14$ s   \\\hline
		~$150$~ &~$100$~&        ~$103$~        &      ~$8$~         &       ~$103$~       &      ~$8$~ &  $23$ min $14$ s  \\\hline
		~$500$~ & ~$20$~&        ~$339$~        &      ~$13$~         &       ~$339$~       &  ~$13$~  & ~$6$ min $57$ s~     \\\hline
	\end{tabular}
\end{center}
\caption{Experimental data for Algorithm $\cA_\DD$}
\end{table}

\begin{table}
\begin{center}
 	\begin{tabular}{|c||c|c||c|c|c|c|}\cline{6-7}
 			\multicolumn{5}{c|}{Series 1}&
 			\multicolumn{2}{|c|}{~Num. of samples~} \\ 
 			\hline
 			~$n$~ 	& 	\ \ \ ~$\eta$~ \ \ \      &    ~$\rho$~      &       \ \  ~$p$~  \ \     &     ~$t$~    &       ~this work~    & \  ~\cite{corper}~ \ \\   \hline
 			~$20$~ &        ~$1000$~        &      ~$200$~         &       ~$7$~         &     ~$6$~ & ~$13$~  & ~$21$~ \\ \hline
 			~$30$~ &        ~$1000$~        &      ~$100$~         &       ~$8$~         &     ~$7$~ &  ~$15$~   &~$31$~ \\ \hline
 			~$50$~ &        ~$800$~        &      ~$100$~         &       ~$10$~         &  ~$9$~ &  ~$19$~ &~$51$~ \\ \hline
 			\multicolumn{5}{c|}{Series 2}&
 			\multicolumn{2}{|c|}{ ~Num. of encodings ~ } \\ 
 			\hline
 			~$n$~ 	& 	\ \ \ ~$\eta$~ \ \ \      &    ~$\rho$~      &  ~$p$~  &     ~$t$~    & 
 			~this work~    & \  ~\cite{cheon15}~ \ \\   \hline
 			~$20$~ &        ~$1000$~        &      ~$200$~         &       ~$15$~         &     ~$4$~ & ~$34$~  & ~$42$~ \\ \hline
 			~$30$~ &        ~$1000$~        &      ~$100$~         &       ~$22$~         &     ~$5$~ &  ~$49$~   &~$62$~ \\ \hline
 			~$50$~ &        ~$800$~        &      ~$100$~         &       ~$35$~         &  ~$5$~ &  ~$75$~ &~$102$~ \\ \hline
 			\multicolumn{5}{c|}{Series 3}&
 			\multicolumn{2}{|c|}{ ~Num. of encodings of zero~ } \\ 
 			\hline
 			~$n$~ 	& 	\ \ \ ~$\eta$~ \ \ \      &    ~$\rho$~      &    ~$p$~   &     ~$t$~    &    \  ~this work~  \ &  ~\cite{cheon15}~  \\   \hline
 			~$20$~ &        ~$1000$~        &      ~$200$~         &       ~$7$~         &     ~$6$~ & ~$7$~  & ~$20$~ \\ \hline
 			~$30$~ &        ~$1000$~        &      ~$100$~         &       ~$8$~         &     ~$7$~ &  ~$8$~   &~$30$~ \\ \hline
 			~$50$~ &        ~$800$~        &      ~$100$~         &       ~$10$~         &  ~$9$~ &  ~$10$~ &~$50$~ \\ \hline
 	\end{tabular}
 \end{center}
\caption{Experimental data for the CRT-ACD Problem
and the CLT13 Problem}
\end{table}

\subsection*{Acknowledgments}
The authors thank the anonymous reviewers of ANTS-XIV for their helpful comments.
The second author acknowledges support by the Luxembourg National Research Fund through grant PRIDE15/10621687/- SPsquared.
 
\bibliographystyle{alpha}
\bibliography{CLTBiblio}

\end{document}